\def\proof{\@ifnextchar[{\@oproof}{\@nproof}}
\def\@oproof[#1][#2]{\trivlist\item[\hskip\labelsep
\textit{#2 Completion of the proof of\ #1.}~]\ignorespaces}
\def\@nproof{\trivlist\item[\hskip\labelsep\textit{Proof.}~]\ignorespaces}
\newdimen\plusheight
\def\+{\;\lower\plusheight\hbox{$+$}\;}
\newdimen\minusheight
\def\-{\;\lower\minusheight\hbox{$-$}\;}
\newdimen\cdotsheight
\def\cds{\lower\cdotsheight\hbox{$\cdots$}}
\DeclareMathOperator{\rank}{rank}
\DeclareMathOperator{\Hom}{Hom}
\DeclareMathOperator{\divi}{div}
\DeclareMathOperator{\Sym}{Sym}
\DeclareMathOperator{\Gr}{Gr}
\DeclareMathOperator{\rk}{rk}
\DeclareMathOperator{\Supp}{Supp}
\DeclareMathOperator{\Max}{Max}
\numberwithin{equation}{section}
\newtheorem{theorem}{Theorem}[section]
\newtheorem{lemma}[theorem]{Lemma}
\newtheorem{cor}[theorem]{Corollary}
\newtheorem{proposition}[theorem]{Proposition}
\newtheorem{remark}[theorem]{Remark}
\newtheorem{definition}[theorem]{Definition}
\newtheorem{ex}[theorem]{Example}
\title{Asymptotic slopes and strong semistability  on surfaces}
\author[]{Mitra Koley}
\address{Stat-Math Unit, Indian Statistical Institute, 203 B.T. Road, Kolkata, India 700035}
\email{mitra.koley@gmail.com}
\author[]{A. J. Parameswaran}
\address{School of Mathematics, Tata Institute of Fundamental Research, Homi Bhabha Road, Mumbai, India 400005}
\email{param@math.tifr.res.in}
\keywords{Asymptotic slopes; strong semistability.}
\subjclass[2010]{20C08, 20F55}
\begin{document}

\maketitle


\begin{abstract}
In this article we study asymptotic slopes of strongly semistable vector bundles on a smooth projective surface. A connection between asymptotic slopes and strong restriction theorem of a strongly semistable vector bundle is shown. We also  give an equivalent criterion of strong semistability of a vector bundle in terms of its asymptotic slopes under some assumptions on the surface and on the bundle.
\end{abstract}
\bigskip

\section{Introduction}
Let $X$ be a smooth/normal projective variety over an algebraically closed field $\mathbb{K}$ and $H$ be an ample line bundle on $X$. Let $E$ be a vector bundle on $X$. A subbundle $0\neq F\subset E$ of rank $k$, is said to be \emph{maximal}, if $\deg F$ is maximal among all subbundles of rank $k$. Maximal subbundles of vector bundles over a smooth projective curve have been studied by many authors. Maximal line subbundles  of a rank two bundle on a smooth projective curve have been studied in \cite{LN}. For higher rank vector bundles  again on curves, maximal subbundles are studied in \cite{MS} and  in many subsequent papers. 

In  \cite{PS}, the second author and Subramanian studied the behavior of maximal subbundles of a vector bundle on a smooth projective curve after finite pull backs.
We  briefly discuss their results  here.
Let $C$ be a smooth projective curve defined over an algebraically closed field $\mathbb{K}$ of arbitrary characteristic. Let $E$ be a vector bundle of rank $r$ over $C$. For each $1 \leq k < r$, the slope of maximal subbundle is denoted by 
$e_k(E)$, 
$$e_k(E):= \Max \{\frac{\deg(W)}{k}~| ~W \subset E \textrm{ is a subbundle of rank }k \}.$$

Define the asymptotic $k$-spectrum $\mathcal{AS}_k(E)$ and the asymptotic $k$-slope $\nu_k(E)$ as follows:

$$\mathcal{AS}_k(E):= \{\frac{e_k(f^* (E))}{\deg f}\}.$$
$$\nu_k(E ):= \textrm{ Limsup } \frac{e_k(f^* (E))}{\deg f}= \textrm{ Limsup }  \mathcal{AS}_k(E).$$

where the supremum is taken over all finite morphisms $f: D\to  C$.
One of their main result is the following:
\begin{theorem}[\protect{\cite[Theorem 4.1]{PS}}]
\label{Main-result-curve}
 Let $C$ be a smooth  projective curve defined over an algebraically closed field $\mathbb{K}$ of arbitrary characteristic and $E$ be a  vector bundle on $C$. Then $E$ is strongly semistable if and only if $\nu_k (E ) = \mu(E)$ for some $k$. Moreover if 
 $\nu_k (E ) = \mu(E)$ for some $k$,  then $\nu_j(E)=\mu(E)$ for all $j$.
\end{theorem}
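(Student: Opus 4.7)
My plan is to establish the equivalence by proving two sharper statements whose conjunction also yields the moreover clause: (i) if $E$ is strongly semistable, then $\nu_k(E)=\mu(E)$ for every $1\le k<\rk E$; and (ii) if $E$ is not strongly semistable, then $\nu_k(E)>\mu(E)$ for every $1\le k<\rk E$. Direction (i) supplies the forward implication together with the ``all $k$'' part of the moreover; (ii) is the contrapositive of the reverse implication.

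For (i), the upper bound $\nu_k(E)\le\mu(E)$ is immediate because strong semistability is preserved under any finite pullback, so every $f^*E$ is semistable and any rank-$k$ subbundle $W\subset f^*E$ satisfies $\mu(W)\le\deg(f)\mu(E)$. For the lower bound I invoke a Nagata--Lange type inequality: on a smooth projective curve of genus $g$, every semistable bundle $V$ of rank $r$ admits a rank-$k$ subbundle of slope at least $\mu(V)-c(r,k,g)$, with $c(r,k,g)$ depending only on $r,k,g$. Applying this to the Frobenius iterates $V=F^{n*}E$, which live on the same curve and have degree $p^n$, yields $e_k(F^{n*}E)/p^n\ge\mu(E)-c/p^n$, so $\nu_k(E)\ge\mu(E)$.

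For (ii), Langer's theorem ensures that after sufficiently many Frobenius pullbacks $g:C\to C$, the Harder--Narasimhan filtration $0=G_0\subset G_1\subset\cdots\subset G_s=g^*E$ of $g^*E$ has each successive quotient $G_i/G_{i-1}$ strongly semistable, with slopes $\lambda_1>\lambda_2>\cdots>\lambda_s$; non-strong-semistability of $E$ forces $s\ge 2$. This filtration is stable under any further finite pullback $h:D\to C$, meaning the HN filtration of $h^*g^*E$ is $h^*G_\bullet$. For $k$ satisfying $\rk G_{j-1}<k\le\rk G_j$, any rank-$k$ subbundle of $h^*g^*E$ filters through $h^*G_\bullet$ with successive quotients embedded in the strongly semistable bundles $h^*(G_i/G_{i-1})$ of slope $\deg(h)\lambda_i$; summing the resulting degree bounds yields
\begin{equation*}
e_k(h^*g^*E)\le \deg(h)\Big[\sum_{i<j}(\rk G_i-\rk G_{i-1})\lambda_i+(k-\rk G_{j-1})\lambda_j\Big].
\end{equation*}
The matching lower bound is obtained by lifting a near-maximal rank-$(k-\rk G_{j-1})$ subbundle of $h^*(G_j/G_{j-1})$, whose existence follows from (i) applied to the strongly semistable $G_j/G_{j-1}$. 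After dividing by $k\deg(gh)$, the function $k\mapsto k\cdot\nu_k(E)$ is continuous and piecewise linear in $k$ with strictly decreasing slopes $\lambda_1/\deg(g),\ldots,\lambda_s/\deg(g)$; a direct calculation shows that $\nu_k(E)$ itself is then strictly decreasing in $k$, equals $\mu(E)$ at $k=\rk E$, and is therefore strictly greater than $\mu(E)$ for every $k<\rk E$.

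The two main technical obstacles are the inputs used above: the uniform Nagata--Lange bound for (i)---classical in rank two and extended to higher rank by Lange, Hirschowitz and others---and Langer's stabilization of the HN filtration under iterated Frobenius pullback, which is the essential positive-characteristic input for (ii). Once these are granted, the proof reduces to the combinatorial observation that the slope of the ``top $k$'' strong-HN mass is strictly decreasing in $k$ if and only if the strong HN filtration is nontrivial; equality $\nu_k(E)=\mu(E)$ at a single $k<\rk E$ therefore collapses the filtration to one piece, i.e.\ forces $E$ to be strongly semistable.
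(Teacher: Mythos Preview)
First, a framing point: this theorem is not proved in the present paper---it is quoted from \cite[Theorem~4.1]{PS}. What the paper does prove is the surface analogue (Theorem~\ref{main3}), and its method---constructing finite covers $\tilde X_n\to X$ as complete intersections of sections of a very ample line bundle on the relative Grassmannian $\Gr(k,E)$, then restricting the tautological subbundle $\mathcal S(E)$ and computing its slope (Proposition~\ref{key-proposition3})---is lifted directly from the curve argument in \cite{PS}. So the benchmark you should compare against is that Grassmannian construction.

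Your route is genuinely different. In positive characteristic it is essentially correct and pleasantly soft: for (i) you exploit that Frobenius does not change the genus, so the Mukai--Sakai/Lange bound $e_k(V)\ge\mu(V)-(r-k)g/r$ applied to $V=F^{n*}E$ gives $e_k(F^{n*}E)/p^n\to\mu(E)$; for (ii) the concavity computation with the strong HN slopes is the same as in \cite{PS}. What you gain is that no geometry on $\Gr(k,E)$ is needed; what the Grassmannian method gains is a uniform, characteristic-free construction of the covers.

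That last point is exactly where your argument has a gap. The statement is over a field of \emph{arbitrary} characteristic, and your lower bound in (i) rests on Frobenius iterates. In characteristic $0$ there is no Frobenius; for a finite cover $f:D\to C$ of degree $d$, Riemann--Hurwitz gives $g(D)-1\ge d\,(g(C)-1)$, so the Lange constant $(r-k)g(D)/r$ grows linearly in $d$ and the inequality $e_k(f^*E)/d\ge\mu(E)-(r-k)g(D)/(rd)$ no longer forces $\nu_k(E)\ge\mu(E)$ once $g(C)\ge 2$. Since your proof of (ii) invokes (i) for the strongly semistable HN quotients, the gap propagates there as well. Closing it requires producing covers together with rank-$k$ subbundles whose slope is within $o(\deg f)$ of $\deg f\cdot\mu(E)$; this is precisely what the very-ample-line-bundle construction on $\Gr(k,E)$ in \cite{PS} (mirrored in Section~4 here) supplies, and it does not follow from the Nagata--Lange inequality alone.
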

Moreover in that paper \cite{PS}, the authors gave an explicit formula of $\nu_k(E)$ for an arbitrary vector bundle $E$ in terms of degrees and ranks of the strong Harder-Narasimhan factors of $E$.

Here in this article we study asymptotic $k$-spectrum of
vector bundles defined over a smooth projective surface and wanted to understand whether similar results still hold for smooth projective surfaces. First we show that
even if one defines asymptotic $k$-slope of a vector bundle similar to curve case;  one can not expect an analogue of Theorem~\ref{Main-result-curve}. This is because such an analogues result for rank $2$ vector bundles yields strong restriction theorem (Theorem~\ref{strong-restriction}). However we show that an analogues theorem for discriminant zero strongly semistable  bundles with some more additional conditions on the bundle and on the underlying surface (see section $4$ for more details). 

The paper is organized as follows: in Section 2, we recollect relevant definitions and some useful facts about strong semistability and discriminants.
In Section $3$ we prove that  an analogue theorem to Theorem~\ref{Main-result-curve}  for rank $2$ vector bundles implies strong restriction theorem (Theorem~\ref{strong-restriction}). Section $4$
is devoted on the study of asymptotic $k$-spectrum of vector bundles of arbitrary rank  with zero discriminants.

\section{Preliminaries}
In this section we  recall some definitions and some useful facts  of  relevant topics which we need in later sections. Here  in general the underlying space is always assumed to be a normal projective surface.

Let $X$ be a smooth/normal projective surface over an algebraically closed field $\mathbb{K}$. Let $H$ be an ample line bundle on $X$ and $E$ be a torsion free sheaf of rank $r$ defined over $X$. Then the slope of $E$ with respect to $H$ is defined by
$$\mu(E)=\frac{c_1(E)\cdot H}{r}.$$
A torsion free sheaf $E$  on $X$ is called \emph{semistable}(resp. \emph{stable}) if for every nonzero subsheaf $W$ of $E$, $\mu(W)\leq \mu(E)$ (resp. $\mu(W)<\mu(E)$); equivalently  for every torsion free quotient  sheaf $Q$ of $E$, we have $\mu(Q)\geq \mu(E)$ (resp. $\mu(Q)>\mu (E)$). 

When $\mathbb{K}$ is a field of characteristic $p>0$,  let $F_X:X\to X$ denote the absolute Frobenius morphism. Then a vector bundle $E$ over $X$ is called \emph{strongly semistable}(resp. \emph{strongly stable}) if  for all $n\geq 0$, the Frobenius pull back ${F^n_X}^*(E)$ of $E$ is semistable (resp. stable).

Given a torsion free sheaf $E$, there exists a unique increasing filtration  of torsion free sheaves (known as the \emph{Harder-Narasimhan filtration})

$$E_{\bullet}: \quad \{0=F_0\subset F_1\subset \cdots \subset F_l=E\}$$

such that for each $i$, $F_i/F_{i-1}$ is a semistable torsion free sheaf with slope $\mu_i$ satisfying $\mu_i=\mu(\frac{F_i}{F_{i-1}})>\mu_{i+1}=\mu(\frac{F_{i+1}}{F_i})$.

The torsion free sheaves $\{F_i /F_{i-1}: 1\leq i\leq l\}$ are called \emph{the Harder-Narasimhan factors} of the bundle $E$. The factor  $F_1$ is called the \emph{maximal destabilizing subsheaf} of $E$. It's slope $\mu(F_1)$ is  denoted by
$\mu_{max}(E)$. The factor $E/F_{l-1}$ is called the \emph{minimal destabilizing quotient} of $E$ and it's  slope $\mu(E/F_{l-1})$ is denoted by $\mu_{min}(E)$.

If $\mathbb{K}$ is a field of characteristic $p>0$, then a filtration  
$$ E_{\bullet}: \quad \{0=F_0\subset F_1\subset \cdots \subset F_l=E\}$$
is called the \emph{strong Harder-Narasimhan} filtration of $E$, if 
it is the Harder-Narasimhan filtration of $E$ and for each $i$,
the factor $F_i/F_{i-1}$ is a strongly semistable sheaf. When $X$ is smooth,
by  a theorem of Langer (\cite{Langer}), for any torsion free sheaf $E$, there exists  an $n_0\in \mathbb{N}$ such that for all  ${F_X^{n_0}}^{*} (E)$ has  strong Harder-Narasimhan filtration.
Now we recall the following useful lemma.
\begin{lemma} (\cite[Lemma 1.3.3]{Lehn})
 Let  $F$ and $G$ be torsion free sheaves  such that  $\mu_{min}(F)>\mu_{max}(G)$  then $\Hom(F, G) = 0$.
\end{lemma}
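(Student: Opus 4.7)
The plan is a proof by contradiction using the Harder--Narasimhan filtration. Suppose $\phi: F \to G$ is nonzero and let $I := \operatorname{im}(\phi) \subset G$. As a subsheaf of the torsion free sheaf $G$, the image $I$ is torsion free, and it is simultaneously a nonzero quotient of $F$ (namely $F/\ker\phi$) and a nonzero subsheaf of $G$. The goal is to sandwich its slope as
\begin{equation*}
\mu_{min}(F) \leq \mu(I) \leq \mu_{max}(G),
\end{equation*}
which directly contradicts the hypothesis $\mu_{min}(F) > \mu_{max}(G)$ and forces $\phi = 0$.

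For the upper bound, I would intersect $I$ with the HN filtration $\{0 = G_0 \subset \cdots \subset G_m = G\}$, whose factors $G_j/G_{j-1}$ are semistable of slopes $\nu_1 > \cdots > \nu_m$ with $\nu_1 = \mu_{max}(G)$. The induced filtration $I_j := I \cap G_j$ of $I$ has successive quotients $I_j/I_{j-1}$ injecting into the $G_j/G_{j-1}$, hence each of slope at most $\nu_j \leq \mu_{max}(G)$ by semistability of $G_j/G_{j-1}$. Since $\mu(I)$ is a rank-weighted average of the slopes $\mu(I_j/I_{j-1})$, this forces $\mu(I) \leq \mu_{max}(G)$. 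For the lower bound, I would push the HN filtration $\{0 = F_0 \subset \cdots \subset F_l = F\}$ of $F$ forward under $\phi$ and observe that each successive quotient $\phi(F_i)/\phi(F_{i-1})$ of the resulting filtration of $I$ is a quotient of the semistable factor $F_i/F_{i-1}$ (of slope $\mu_i \geq \mu_{min}(F)$), hence has slope at least $\mu_i$; averaging once more yields $\mu(I) \geq \mu_{min}(F)$.

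I expect the main technical obstacle to be careful handling of non-saturated subsheaves on the surface, since the quotients $I_j/I_{j-1}$ and $\phi(F_i)/\phi(F_{i-1})$ need not be saturated in the respective HN factors. For a torsion free subsheaf $W$ of a torsion free sheaf, the cokernel $W^{sat}/W$ is a torsion sheaf whose first Chern class is represented by an effective divisor, or is zero if the torsion is supported in codimension $\geq 2$; either way $\mu(W) \leq \mu(W^{sat})$, and the dual remark applies to quotients. Folding these observations into the filtration arguments above reduces the slope estimates to saturated pieces, where semistability of the HN factors applies directly and yields the two sandwiching inequalities. Combining them closes the contradiction and gives $\Hom(F,G) = 0$.
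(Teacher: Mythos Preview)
Your argument is correct and is essentially the standard proof of this fact. Note, however, that the paper does not supply its own proof of this lemma at all: it merely quotes the statement from \cite[Lemma~1.3.3]{Lehn} as a known fact, so there is nothing in the paper to compare against. Your contradiction via the image $I=\operatorname{im}(\phi)$, filtered by the Harder--Narasimhan filtrations of $F$ and $G$, is precisely the argument one finds in that reference (where one first shows that there is no nonzero map between semistable sheaves with the wrong slope inequality, and then reduces to this case via the HN factors); your care with saturation and possible torsion in the induced subquotients is appropriate for the surface setting and does not affect the conclusion.
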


Discriminant of a sheaf is an important invariant.  Let $E$ be a
torsion free sheaf of rank $r$  then the  discriminant $\Delta(E)$ of $E$  is defined by  $$\Delta(E)= (r -1)c_1(E)^2 - 2rc_2 (E),$$ where  for each $i$, $c_i(E)$ denotes the $i$-th Chern class of $E$.  In
particular, if $E$ is a vector bundle of rank $2$, then  $\Delta(E) = c_1(E)^2 -4c_2 (E)$.  In the following Proposition we write down a few facts about discriminant which we will use in the later sections. Let $X$ be a smooth surface.
\begin{proposition}
\label{discreminant-properties}
\begin{enumerate}
 \item Let $E$ be a torsion free sheaf  and $L$ be a line bundle  on $X$, then
       $$\Delta(E)=\Delta(E^*) \textrm{ and } \Delta(E\otimes L)=\Delta(E).$$
 \item  If $E$ is a strongly semistable torsion free sheaf  on $X$, then  $\Delta(E)\leq 0$. 
 
 \item Let $E$ be a vector bundle  of rank $r$.  If $\Delta(E)=0$, 
 then $\Delta(S^n(E))=0$, for all $n>0$.
 \end{enumerate}
 \end{proposition}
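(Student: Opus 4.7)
The proposition has three parts. Part (1) is a routine Chern class computation, part (2) is an instance of the Bogomolov inequality in positive characteristic, and part (3) reduces to a splitting-principle calculation. The only genuinely nontrivial step is (2); parts (1) and (3) are elementary.

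For (1), I would substitute directly into $\Delta(E) = (r-1)c_1(E)^2 - 2rc_2(E)$. The dual formula follows from $c_1(E^*) = -c_1(E)$ and $c_2(E^*) = c_2(E)$. For the tensor product, substitute the standard expansions $c_1(E \otimes L) = c_1(E) + rc_1(L)$ and $c_2(E \otimes L) = c_2(E) + (r-1)c_1(E)c_1(L) + \binom{r}{2}c_1(L)^2$, then expand and observe that all $c_1(L)$-dependent terms cancel identically.

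For (2), I would exploit the Frobenius scaling of Chern classes. Since the absolute Frobenius acts on the Chow ring as multiplication by $p^k$ on codimension-$k$ cycles, one has $c_1((F_X^n)^*E) = p^n c_1(E)$ and $c_2((F_X^n)^*E) = p^{2n} c_2(E)$, whence $\Delta((F_X^n)^*E) = p^{2n}\Delta(E)$ for every $n \geq 0$. By strong semistability each $(F_X^n)^*E$ is semistable. Applying Langer's Bogomolov-type inequality for semistable torsion free sheaves on a smooth projective surface furnishes a uniform upper bound $\Delta((F_X^n)^*E) \leq C$ with $C$ depending only on $X$, $H$, the rank, and $c_1(E)$. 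Combining yields $p^{2n}\Delta(E) \leq C$ for every $n$, and letting $n \to \infty$ forces $\Delta(E) \leq 0$. The uniform bound furnished by Langer's theorem is the one non-formal input and is the main obstacle.

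For (3), I would apply the splitting principle. Let $\alpha_1, \ldots, \alpha_r$ denote the formal Chern roots of $E$ and set $\beta_i := \alpha_i - c_1(E)/r$, so that $\sum_i \beta_i = 0$ and a short calculation gives $\sum_i \beta_i^2 = \Delta(E)/r$. The Chern roots of $S^n E$ (of rank $N = \binom{n+r-1}{r-1}$) are $\gamma_I = \sum_j i_j \alpha_j$, indexed by weak compositions $I = (i_1, \ldots, i_r)$ of $n$; the associated centered roots $\delta_I := \gamma_I - c_1(S^n E)/N$ evaluate to $\delta_I = \sum_j (i_j - n/r)\beta_j$. A direct manipulation of the definition of $\Delta$ yields the general identity $\Delta(V) = N \sum_I \delta_I^2$ for any rank-$N$ bundle with Chern roots $\gamma_I$ and centered roots $\delta_I$. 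Applying this to $S^n E$, expanding the squares, and using the symmetry of the index set together with the relations $\sum_j (i_j - n/r) = 0$ and $\sum_j \beta_j = 0$, collapses the double sum to a rational multiple of $\sum_j \beta_j^2 = \Delta(E)/r$. Hence $\Delta(S^n E)$ is a rational multiple of $\Delta(E)$ and vanishes whenever $\Delta(E) = 0$.
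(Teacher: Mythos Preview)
Your proposal is correct. Parts (1) and (2) match the paper's treatment: the paper leaves (1) to the reader and for (2) simply cites Huybrechts--Lehn in characteristic $0$ and Langer's Theorem~0.1 in characteristic $p>0$, whereas you unpack Langer's own Frobenius-scaling argument. One caution: you write that the constant $C$ depends on $c_1(E)$, but for the argument to work you must apply the bound to each $(F_X^n)^*E$, whose first Chern class is $p^n c_1(E)$; the point is that Langer's weak Bogomolov bound for semistable sheaves depends only on $X$, $H$, and the rank, \emph{not} on $c_1$, so the bound is genuinely uniform in $n$. Your phrasing obscures this, though the argument survives.

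For (3) the approaches genuinely differ. The paper quotes explicit closed formulas for $c_1(S^nE)$ and $c_2(S^nE)$ from Bogomolov's Lemma~10.1, substitutes $\Delta(E)=0$ to simplify $c_2(S^nE)$, and then verifies $\Delta(S^nE)=0$ by a direct binomial computation. Your splitting-principle route is cleaner: the identity $\Delta(V)=N\sum_I\delta_I^2$ for centered Chern roots, combined with the symmetry of the compositions of $n$, shows at once that $\Delta(S^nE)$ is a rational multiple of $\sum_j\beta_j^2=\Delta(E)/r$ without ever computing that multiple. Your argument is more conceptual and avoids external references; the paper's is more explicit but relies on looking up Bogomolov's Chern-class formulas.
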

 \begin{proof} $(1)$ is an easy computation and left to the reader.\\
 $(2)$ When $X$ is defined over a field of characteristic $0$, then it follows from  Theorem $3.4.1$ of \cite{Lehn}. When $X$ is defined over a field of  prime characteristic $p>0$, the proposition follows from Theorem $0.1$ of \cite{Langer}.\\
 $(3)$ This part may be known to experts. Since we are unable to find a reference,  we include a proof for the convenience of reader. \\
 By Lemma $10.1$ of \cite{Bogomolov},
    $$c_1(S^n(E))=\binom{n+r-1}{r} c_1(E)=P_r(n)c_1(E).$$
     $$c_2(S^n(E))=P_{r+1}(n)[c_2(E)-\frac{r-1}{2r}c_1(E)^2]+\frac{1}{2}[P^2_r(n)-\frac{n}{r}P_r(n)]c_1(E)^2,$$
     where $P_r(n)=\binom{n+r-1}{r}.$
      That is $$c_2(S^n(E))=-\frac{P_{r+1}(n)\Delta(E)}{2r}+\frac{1}{2}[P^2_r(n)-\frac{n}{r}P_r(n)]c_1(E)^2.$$
    Since $\Delta(E)=0$, in our case
    $$c_2(S^n(E))=\frac{1}{2}[P^2_r(n)-\frac{n}{r}P_r(n)]c_1(E)^2.$$
 Note that $\rank(S^n(E))=\binom{n+r-1}{r-1}$.
 \begin{eqnarray*}
  \Delta(S^n (E))&=&(\rk S^n(E)-1)c_1^2(S^n(E))-2\rk(S^n(E))c_2(S^n(E))\\
  &=&[\binom{n+r-1}{r-1}-1]{\binom{n+r-1}{r}}^2 c_1^2(E)-\\
  && \binom{n+r-1}{r-1}[{\binom{n+r-1}{r}}^2-\frac{n}{r}{\binom{n+r-1}{r}}]c_1^2(E)\\
  &=&[\frac{n}{r}{\binom{n+r-1}{r}}\binom{n+r-1}{r-1}-{\binom{n+r-1}{r}}^2]c_1^2(E)\\
  &=& 0.\\
 \end{eqnarray*}
\end{proof}

\begin{lemma}
\label{discriminant}
 Let $X$ be a smooth surface and $H$ be an ample line bundle on $X$.
 Let $0\to V\to E\to Q\to 0$ be an exact sequence of torsion free sheaves such that $\mu(V)=\mu(Q)$. If $E$ is strongly semistable  with $\Delta(E)=0$, then $\Delta(Q)=\Delta(V)=0$.
\end{lemma}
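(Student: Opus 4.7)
The plan is to combine a Chern-class identity for an extension with the Hodge Index theorem and the Bogomolov inequality (Proposition~\ref{discreminant-properties}(2)). Setting $r_1=\rk V$, $r_2=\rk Q$ and $r=r_1+r_2$, a direct computation with $c_1(E)=c_1(V)+c_1(Q)$ and $c_2(E)=c_2(V)+c_2(Q)+c_1(V)\cdot c_1(Q)$ yields the identity
\[
\frac{\Delta(E)}{r}-\frac{\Delta(V)}{r_1}-\frac{\Delta(Q)}{r_2}\;=\;\frac{(r_2\,c_1(V)-r_1\,c_1(Q))^2}{r\,r_1\,r_2}.
\]
I would derive this first as a purely formal step, since it depends only on additivity of Chern classes in the short exact sequence and on the definitions of the discriminants.

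Next I would exploit the slope equality. Setting $D:=r_2\,c_1(V)-r_1\,c_1(Q)$, the hypothesis $\mu(V)=\mu(Q)$ translates to $D\cdot H=0$. The Hodge Index theorem applied to the ample $H$ then gives $D^{2}\le 0$, with equality iff $D$ is numerically trivial. Substituting $\Delta(E)=0$ into the identity above therefore produces
\[
\frac{\Delta(V)}{r_1}+\frac{\Delta(Q)}{r_2}\;=\;-\frac{D^{2}}{r\,r_1\,r_2}\;\ge 0.
\]

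The remaining step is to bound the left-hand side from above. Here I would argue that $V$ and $Q$ are themselves strongly semistable: any subsheaf of $V$ sits inside $E$, so the semistability of $E$ together with $\mu(V)=\mu(E)$ forces $V$ to be semistable, and a dual argument using torsion-free quotients of $Q$ gives the same for $Q$; applying the same reasoning to every Frobenius pullback of the sequence $0\to V\to E\to Q\to 0$ (which remains exact and preserves the slope equalities) shows strong semistability of $V$ and $Q$. Proposition~\ref{discreminant-properties}(2) then gives $\Delta(V)\le 0$ and $\Delta(Q)\le 0$, so $\Delta(V)/r_1+\Delta(Q)/r_2\le 0$. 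Combined with the reverse inequality from the Hodge-Index step, equality must hold throughout, forcing $\Delta(V)=\Delta(Q)=0$ (and incidentally $D^{2}=0$).

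The main technical point to watch is the Chern-class identity, which must be stated with the correct weights; the conceptual heart of the argument, however, is simply the interplay between the Hodge Index inequality for $D$ and the Bogomolov inequality for the strongly semistable sheaves $V$ and $Q$, which squeezes both discriminants to zero. Verifying the strong semistability of $V$ and $Q$ under Frobenius pullback is routine but should be stated explicitly so that Proposition~\ref{discreminant-properties}(2) actually applies in positive characteristic.
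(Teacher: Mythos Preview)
Your proof is correct and follows essentially the same route as the paper: both derive the identity $\frac{\Delta(E)}{r}=\frac{\Delta(V)}{r_1}+\frac{\Delta(Q)}{r_2}+\frac{(r_2c_1(V)-r_1c_1(Q))^2}{r r_1 r_2}$ from the additivity of Chern classes, apply the Hodge index theorem to $D=r_2c_1(V)-r_1c_1(Q)$ using $D\cdot H=0$, and then squeeze with the Bogomolov inequality for the strongly semistable $V$ and $Q$. Your write-up is in fact a bit more explicit than the paper's in justifying the strong semistability of $V$ and $Q$ under Frobenius pullback, which is a welcome clarification.
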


\begin{proof}
 Let $\rank E=n$.
 By definition $\Delta(E)=(n-1)c_1(E)^2-2nc_2(E)$.
 Let $\rank V=k$ and $\rank Q=l$. Then $n=k+l$.
 Then $\Delta(E)=(k+l-1)c_1(E)^2- 2(k+l)c_2(E)$. Therefore we have,
 \begin{eqnarray*}
 kl\Delta(E)&=&kl(k+l-1)c_1(E)^2- 2kl(k+l)c_2(E)\\
            &=&kl(k+l-1)(c_1(V)+c_1(Q))^2- 2kl(k+l)(c_1(V)c_1(Q)-c_2(V)-c_2(Q))\\
            &=& (k+l)l[(k-1)c_1(V)-2kc_2(V)]+(k+l)k[(l-1)c_1(Q)-2lc_2(Q)]\\
            & & +[l^2c_1(V)^2+k^2c_1(Q)^2-2klc_1(V)c_1(Q)]\\
            &=& (k+l)l\Delta(V)+(k+l)k\Delta(Q)+(lc_1(V)-kc_1(Q))^2.\\
 \end{eqnarray*}
 Hence
 \begin{eqnarray*}
 \frac{\Delta(E)}{k+l} &=& \frac{\Delta(V)}{k}+\frac{\Delta(Q)}{l}
                      +\frac{1}{kl(k+l)}(lc_1(V)-kc_1(Q))^2.\\
                      &\leq& \frac{\Delta(V)}{k}+\frac{\Delta(Q)}{l}
                      +\frac{kl}{(k+l)H^2}(\mu(V)-\mu(Q))^2\\
                      &=& \frac{\Delta(V)}{k}+\frac{\Delta(Q)}{l}.
 \end{eqnarray*}
 
 The middle inequality follows from Hodge index theorem and the last equality follows because $\mu(V)=\mu(Q)$.
 Since $E$ is strongly semistable and $\mu(V)=\mu(Q)=\mu(E)$, it follows that
 $V,Q$ are also strongly semistable. Hence  $\Delta(V)\leq 0$ and $\Delta(Q)
 \leq 0$ by Proposition~\ref{discreminant-properties} $(2)$. Since $\Delta (E)=0$, $\Delta(Q)=0$ and $\Delta(V)=0$.
 \end{proof}
 
 Now in next proposition we will see that any torsion free strong semistable sheaf with zero discriminant is a vector bundle.
 This fact might be known to experts but we include its prof for the sake of completeness.
 
 \begin{proposition}
 \label{delta-zero-implies-bundles}
 Let $(X,H)$ be a smooth polarized surface. Let $E$ be a torsion free  strongly semistable sheaf with $\Delta(E)=0$. Then $E$ is a vector bundle.
\end{proposition}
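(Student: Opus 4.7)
The plan is to compare $E$ with its double dual $E^{\ast\ast}$, which on a smooth surface is reflexive, hence locally free. The natural map $E\to E^{\ast\ast}$ is injective (since $E$ is torsion free) and its cokernel $T$ is supported in codimension $\geq 2$, i.e.\ $T$ is a finite length sheaf. The goal is to show that $T=0$, and the strategy is to read off $\operatorname{length}(T)$ from the discriminants and to force it to vanish.

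First I would write down the short exact sequence
\[
0\longrightarrow E\longrightarrow E^{\ast\ast}\longrightarrow T\longrightarrow 0,
\]
and, setting $r=\operatorname{rank}E$ and $\ell=\operatorname{length}(T)$, compute Chern classes using the Whitney sum formula together with $c_1(T)=0$ and $c_2(T)=-\ell$. This yields $c_1(E^{\ast\ast})=c_1(E)$ and $c_2(E^{\ast\ast})=c_2(E)-\ell$, hence
\[
\Delta(E^{\ast\ast})=\Delta(E)+2r\ell=2r\ell.
\]

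Next, the key structural step is to promote strong semistability from $E$ to $E^{\ast\ast}$. Since $X$ is smooth, the Frobenius $F_X$ is flat (Kunz), so $F_X^{n\,\ast}$ commutes with dualization and hence with double dualization, giving $F_X^{n\,\ast}(E^{\ast\ast})=(F_X^{n\,\ast}E)^{\ast\ast}$. It therefore suffices to show that if a torsion free sheaf $G$ on $X$ is semistable then so is $G^{\ast\ast}$. For any subsheaf $W\subset G^{\ast\ast}$, the intersection $W\cap G$ inside $G^{\ast\ast}$ has the same rank as $W$ and differs from it by a subsheaf of the finite length sheaf $G^{\ast\ast}/G$; consequently $c_1(W\cap G)=c_1(W)$, so $\mu(W\cap G)=\mu(W)$. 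A destabilizing subsheaf of $G^{\ast\ast}$ would therefore produce one of $G$, contradicting the semistability of $G$. Applying this at every Frobenius power shows $E^{\ast\ast}$ is strongly semistable.

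Finally, apply Proposition~\ref{discreminant-properties}(2) to the torsion free, strongly semistable sheaf $E^{\ast\ast}$: we get $\Delta(E^{\ast\ast})\leq 0$. Combined with $\Delta(E^{\ast\ast})=2r\ell\geq 0$ from Step~1, this forces $\ell=0$, hence $T=0$ and $E=E^{\ast\ast}$ is locally free. The main obstacle I anticipate is the middle step, i.e.\ verifying carefully that double dualization preserves semistability in this mixed-characteristic setup and that it commutes with the Frobenius pullback on $X$; both rely essentially on the smoothness of $X$ (flatness of Frobenius and local freeness of reflexive sheaves on a smooth surface).
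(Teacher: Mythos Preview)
Your proof is correct and follows essentially the same route as the paper: pass to the reflexive hull $E^{\ast\ast}$, compute $\Delta(E^{\ast\ast})=\Delta(E)+2r\ell$ from the exact sequence $0\to E\to E^{\ast\ast}\to T\to 0$, observe that $E^{\ast\ast}$ is strongly semistable so $\Delta(E^{\ast\ast})\le 0$, and conclude $\ell=0$. The only difference is that the paper simply asserts that strong semistability passes from $E$ to $E^{\ast\ast}$, whereas you supply a justification (flatness of Frobenius via Kunz so that $F_X^{n\,\ast}(E^{\ast\ast})\cong (F_X^{n\,\ast}E)^{\ast\ast}$, plus the argument that a destabilizing subsheaf of $G^{\ast\ast}$ restricts to one of $G$); this extra care is appropriate and does not change the strategy.
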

\begin{proof}
 Let $E^{**}$ denote the reflexive closure of $E$.
 Since $E$ is strongly semistable,  so is $E^{**}$.
 Consider the exact sequence
 $$0\to E\to E^{**}\to E^{**}/E\to 0.$$
 Since $E$ is torsion free, it is locally free in codimension $\geq 2$. Hence $\Supp(E^{**}/E)$ is a finite set of points. 
 Therefore $$c_1(E^{**}/E)=0 \quad \textrm{and}\quad c_2(E^{**}/E)\leq 0.$$ 
Note that $c_2(E^{**}/E)=0$ if and only if $\Supp(E^{**}/E)$ is empty, this is the case precisely when $E=E^{**}$.
Hence 
$$c_1(E^{**})=c_1(E) \quad \textrm{and}\quad c_2(E^{**})=c_2(E)+c_2(E^{**}/E).$$ 
Now \begin{eqnarray*}
     \Delta(E^{**})&=&(r-1)c_1^2(E^{**})-2rc_2(E^{**})\\
                   &=&(r-1)c_1^2(E)-2rc_2(E)-2rc_2(E^{**}/E)\\
                   &=&\Delta(E)-2rc_2(E^{**}/E).\\
    \end{eqnarray*}
Now since $E^{**}$ is strongly semistable, $\Delta(E^{**})\leq 0$.
Hence $\Delta(E)=0$ implies $c_2(E^{**}/E)=0$, i.e. $E=E^{**}$. 
Hence $E$ is reflexive. Now the proposition follows from the fact that reflexive sheaves on smooth surfaces are vector bundles.
\end{proof}

Next we observe that Theorem~$3.1$ of \cite{Langer2} which is proved stable bundles can be extended for the semistable case also. 
\begin{proposition}
\label{generalisation-of-Langer}
 Let $(X,H)$ be a smooth polarized surface with $H$ an ample line bundle. Let $E$ be a vector bundle of rank $r\geq 2$ with $\Delta(E)=0$. Assume that $E$ is strongly semistable. Let $C\in |H|$ be any smooth effective divisor, then $E|_C$ is also strongly semistable.
\end{proposition}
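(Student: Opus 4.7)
The plan is to reduce the assertion to Langer's Theorem~3.1 of \cite{Langer2} (the stable case) by decomposing $E$ into stable constituents via a Jordan--Hölder filtration and using Lemma~\ref{discriminant} together with Proposition~\ref{delta-zero-implies-bundles} to propagate the vanishing of the discriminant down the filtration.

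First I would reduce ``strongly semistable'' on $C$ to mere semistability. Frobenius commutes with restriction to a smooth divisor, so $F_C^{n*}(E|_C)\cong (F_X^{n*}E)|_C$ for all $n\geq 0$, and each $F_X^{n*}E$ is still a strongly semistable vector bundle on $X$ with $\Delta(F_X^{n*}E)=p^{2n}\Delta(E)=0$. It therefore suffices to prove the following claim: \emph{for every strongly semistable vector bundle $V$ on $X$ with $\Delta(V)=0$, the restriction $V|_C$ is semistable on $C$.} The slope of any subquotient of $V$ on $C$ coincides with its $H$-slope on $X$, since $\deg_C(\,\cdot\,|_C)=c_1(\,\cdot\,)\cdot H$, so semistability transfers cleanly.

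To prove the claim I would choose a Jordan--Hölder filtration $0=V_0\subset V_1\subset\cdots\subset V_l=V$ by saturated subsheaves whose graded pieces $V_i/V_{i-1}$ are stable of common slope $\mu(V)$. Each $V_i$ has slope $\mu(V)$, hence is itself strongly semistable: any subsheaf of $F_X^{n*}V_i\subset F_X^{n*}V$ has slope at most $\mu(F_X^{n*}V)=\mu(F_X^{n*}V_i)$. Applying Lemma~\ref{discriminant} to $0\to V_i\to V\to V/V_i\to 0$ gives $\Delta(V_i)=\Delta(V/V_i)=0$ for every $i$, and a second application to $0\to V_{i-1}\to V_i\to V_i/V_{i-1}\to 0$ yields $\Delta(V_i/V_{i-1})=0$ for every graded piece. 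Proposition~\ref{delta-zero-implies-bundles} then promotes each $V_i/V_{i-1}$ to a vector bundle (hence each $V_i$ as well), so that Langer's Theorem~3.1 of \cite{Langer2} applies to the stable $V_i/V_{i-1}$ and gives $(V_i/V_{i-1})|_C$ semistable on $C$.

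The filtration then restricts faithfully to $C$ because every graded piece is locally free, producing a filtration of $V|_C$ whose successive quotients are semistable sheaves of the common slope $\mu_H(V)=\mu_C(V|_C)$. Since any extension of semistable sheaves of a common slope is itself semistable, $V|_C$ is semistable, completing the claim and hence the proposition. The delicate ingredient — the point where the argument needs genuine input from the surface theory rather than a direct mimic of the curve case — is ensuring that the Jordan--Hölder filtration stays short-exact after restriction to $C$, i.e.\ that every $V_i/V_{i-1}$ is an honest vector bundle. This is exactly what Lemma~\ref{discriminant} and Proposition~\ref{delta-zero-implies-bundles} deliver from the hypothesis $\Delta(E)=0$; without them the graded pieces could fail to be locally free and Langer's stable input would not propagate to an extension statement on $C$.
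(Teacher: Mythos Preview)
Your argument is essentially the paper's approach: both routes lean on Lemma~\ref{discriminant} and Proposition~\ref{delta-zero-implies-bundles} to pass the hypotheses $\Delta=0$ and local-freeness down to lower-rank pieces, then invoke Langer's restriction theorem. The paper argues by induction on rank, splitting some Frobenius pullback $F_X^{e*}E$ (at the first $e$ where stability fails) into two strongly semistable pieces $V,Q$ of smaller rank and recursing on those. You instead take a full Jordan--H\"older filtration of each $V=F_X^{n*}E$ and appeal to Langer's theorem on the stable graded pieces directly.

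There is one genuine soft spot. Theorem~3.1 of \cite{Langer2}, as the paper uses it, is a statement about \emph{strongly stable} bundles (strongly stable with $\Delta=0$ restricts to strongly stable). Your Jordan--H\"older factors $V_i/V_{i-1}$ are stable and, as you correctly argue, strongly semistable with $\Delta=0$; but they need not be strongly \emph{stable}, so you cannot invoke Langer's theorem on them as stated. The paper's induction on rank is precisely what circumvents this: rather than applying Langer to the pieces, it applies the proposition itself (inductively) to the lower-rank strongly semistable pieces, and only hits Langer's theorem at the base case when the bundle in hand is actually strongly stable. Your argument is easily repaired the same way: observe that when $l>1$ each $V_i/V_{i-1}$ has rank strictly less than $r$, so the proposition applied inductively gives $(V_i/V_{i-1})|_C$ strongly semistable; and when $l=1$ (i.e.\ $V$ is already stable) but $V$ is not strongly stable, pass to a higher Frobenius pullback where stability fails and split there --- exactly the paper's move. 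Your preliminary reduction via $F_C^{n*}(E|_C)\cong(F_X^{n*}E)|_C$ is clean but does not by itself handle this case.
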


\begin{proof}
Let $E$ be a strongly semistable bundle with $\Delta(E)=0$. If $E$ is strongly stable we are done by  Theorem~$3.1$ of \cite{Langer2}. If not, there exists $e\geq 0$ and an exact sequence $0\to V\to {F_X^e}^*E\to Q\to 0 $,  such that $\mu(V)=\mu({F_X^e}^* E)=\mu(Q)$. Hence $V$  and $Q$ are also strongly semistable with $\Delta (V)=\Delta(Q)=0$ by Lemma~\ref{discriminant}. One also notes that by Proposition~\ref{delta-zero-implies-bundles} $V$ and $Q$ are also bundles. Since rank of $V$  and $Q$ are smaller than rank of $E$, by induction  on rank, $V|_C$  and $Q|_C$  are strongly semistable for all smooth effective divisor $C\in |H|$. Since $V,Q$ are bundles, the following is an exact sequence  of bundles. 
 $$0\to V|_C\to {F_X^e}^*E|_C\to Q|_C\to 0. $$
Now one can see that strong semistablity of $V|_C$  and $Q|_C$ implies ${F_C^e}^*E|_C$ is strongly semistable since $\mu(V|_C)=\mu({F_X^e}^*E|_C)=\mu(Q|_C)$ and hence $E|_C$ is  strongly semistable
for all smooth effective divisor $C\in |H|$.
\end{proof}


\section{Asymptotic slopes and strong restriction}

In this section we show that one can not expect an analogue of Theorem~\ref{Main-result-curve} for arbitrary semistable vector bundles on smooth projective surfaces. Recall that Theorem ~\ref{Main-result-curve} states that, a vector bundle $E$ on a smooth projective curve $C$ is  strongly semistable if and only if $\nu_K(E)=\mu(E)$ for all $k$. 






First we define asymptotic spectrum on surfaces.
Let $X$ be a smooth projective surface over an algebraically closed field $\mathbb{K}$ of characteristic $p>0$. Let $H$ be an ample line bundle on $X$.

\begin{definition}
 Let $E$ be a vector bundle of rank $r\geq 2$ on $X$.

For each $1 \leq k < r$, we denote the slope of maximal subsheaf of rank $k$ by $e_k(E)$, and 
$$e_k(E):= \Max \{\frac{\deg(W)}{k}~| ~W \subset E \textrm{ is a subsheaf of rank }k \}$$

Define the asymptotic $k$-spectrum $\mathcal{AS}_k(E)$ and the asymptotic $k$-slope $\nu_k(E)$ as follows:
Let $f:\tilde{X}\to X$ be a  finite  morphism with $\tilde{X}$ normal.
$$\mathcal{AS}_k(E):= \{\frac{e_k(f^* (E))}{\deg f}\}$$
$$\nu_k(E ):= \textrm{ Limsup } \frac{e_k(f^* (E))}{\deg f}= \textrm{ Limsup } \mathcal{AS}_k(E).$$

where the supremum is taken over all finite  morphisms $f: \tilde{X} \to  X$ with normal $\tilde{X}$.
\end{definition}

Consider $X=\mathbb{P}^2$ and $E=T\mathbb{P}^2$, the tangent bundle of $\mathbb{P}^2$.  It is known that $T\mathbb{P}^2$ is a strongly semistable bundle. With respect to the very ample line bundle $\mathcal{O}_X(1)$, $\mu(T\mathbb{P}^2)=3/2$.
In the following example we see that if we consider only composite of Frobenius morphisms $F^n:\mathbb{P}^2\to\mathbb{P}^2$,  and
the sequence $\{\frac{e_1({F^n}^* (E))}{\deg F^n}: n>0\}$, then 
$\textrm{ Limsup } \frac{e_1({F^n}^* (E))}{\deg F^n}< 3/2=\mu(T\mathbb{P}^2)$.

\begin{ex}
\label{TP^2}
 Consider $X=\mathbb{P}^2$ defined over a field of characteristic $p>0$. Let $E=T\mathbb{P}^2$, the tangent bundle of $\mathbb{P}^2$.  On a line $l\simeq \mathbb{P}^1$,  it is known that 
 $T\mathbb{P}^2|_l\simeq  \mathcal{O}_{\mathbb{P}^1}(2)\oplus \mathcal{O}_{\mathbb{P}^1}(1)$. 
 Hence $F^* T\mathbb{P}^2|_l\simeq \mathcal{O}_{\mathbb{P}^1}(2p)\oplus \mathcal{O}_{\mathbb{P}^1}(p)$. Consider the following exact sequence of sheaves:
 \begin{equation*}
 0\to K\to F^* T\mathbb{P}^2\to Q\to 0.
 \end{equation*}
 Now restricts this to any line $l\simeq \mathbb{P}^1$,  and using $F^* T\mathbb{P}^2|_l\simeq \mathcal{O}_{\mathbb{P}^1}(2p)\oplus \mathcal{O}_{\mathbb{P}^1}(p)$, we have
 \begin{equation}
  \label{*}
  0\to K|_{\mathbb{P}^1}\to \mathcal{O}_{\mathbb{P}^1}(2p)\oplus \mathcal{O}_{\mathbb{P}^1}(p) \to Q|_{\mathbb{P}^1}\to 0.
 \end{equation}
 
 If the map  $\mathcal{O}_{\mathbb{P}^1}(2p)\to Q|_{\mathbb{P}^1}$ induced from \eqref{*}
 is a zero map, then  the map
 $\mathcal{O}_{\mathbb{P}^1}(p) \to Q|_{\mathbb{P}^1}$  induced from \eqref{*} is surjective, hence $\mathcal{O}_{\mathbb{P}^1}(p) \simeq Q|_{\mathbb{P}^1}$. Therefore 
 $\mu(Q|_{\mathbb{P}^1})=p< 3p/2 =\mu(F^* T\mathbb{P}^2)|_{\mathbb{P}^1}$, which contradicts that $F^*T\mathbb{P}^2$ is a semistable  bundle.
 
 Hence the induced map $\mathcal{O}_{\mathbb{P}^1}(2p)\to Q|_{\mathbb{P}^1}$  from \eqref{*}, is nonzero, hence it is an injective map of sheaves, hence $\mu(Q)|_{\mathbb{P}^1}\geq 2p$. Hence  with respect to $\mathcal{O}_X(p), \mu(Q)\geq 2p^2$. Similar calculation for $F^n: \mathbb{P}^2\to\mathbb{P}^2$ shows that if $Q$ is any quotient of ${F^{n}}^* T\mathbb{P}^2$, then  with respect to $\mathcal{O}_X(p^n), \mu(Q)\geq 2p^{2n}> \mu({F^n}^* T\mathbb{P}^2)$. Hence $\textrm{ Limsup } \frac{e_1({F^n}^* (E))}{\deg F^n}< 3/2=\mu(T\mathbb{P}^2)$.
\end{ex}

Before stating our main result of the section we  prove a useful lemma.
\begin{lemma}
\label{lemma-strong-restriction}
 Let $E$ be a strongly semistable vector bundle on a smooth polarized surface $(X,H)$ over an algebraically closed field of characteristic $p>0$. For any $k$, if $\nu_k(E)=\mu(E)$, then $\nu_k(F_X^*E)=\mu(F_X^*E)$.
\end{lemma}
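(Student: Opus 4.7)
Set $q = p = \charac \mathbb{K}$. Since $F_X$ has degree $p^2$ on a surface and $c_1(F_X^*E) = p\, c_1(E)$, we have $\mu(F_X^*E) = p\,\mu(E)$. So the aim is to prove $\nu_k(F_X^*E) = p\,\mu(E)$, and I would do so by establishing the two inequalities separately. The critical observation driving everything is the identity $g \circ F_Z = F_X \circ g$ (both equal the absolute Frobenius raised appropriately), which gives, for every finite morphism $g : Z \to X$ with $Z$ normal,
$$g^*(F_X^*E) \;=\; F_Z^*(g^*E).$$

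For the upper bound $\nu_k(F_X^*E) \le \mu(F_X^*E)$, I would argue that $F_X^*E$ is strongly semistable (since $E$ is), that strong semistability is preserved under the finite pullback $g^*$, and hence that $g^*(F_X^*E)$ is semistable with respect to $g^*H$. Semistability then forces $e_k(g^*F_X^*E) \le \deg(g)\,\mu(F_X^*E)$, and dividing by $\deg g$ and taking limsup gives the desired inequality. No new ideas are needed here.

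For the lower bound $\nu_k(F_X^*E) \ge p\,\mu(E)$, I would exploit $\nu_k(E) = \mu(E)$ by producing, from each good test cover for $E$, a test cover of comparable quality for $F_X^*E$. Concretely, for a finite morphism $g : Z \to X$ with $Z$ normal and a rank-$k$ subsheaf $W \subset g^*E$ of maximal degree, pull back by $F_Z$ and take the image $W'' \subset F_Z^*(g^*E) = g^*(F_X^*E)$ of the natural map $F_Z^*W \to F_Z^*(g^*E)$. Generically $W''$ has rank $k$, and the key computation is that $c_1(W'')\cdot g^*H = p\, c_1(W)\cdot g^*H$. Granting this, $e_k(g^*F_X^*E) \ge p\,e_k(g^*E)$, and dividing by $\deg g$ and taking limsup yields $\nu_k(F_X^*E) \ge p\,\nu_k(E) = p\,\mu(E)$.

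The main obstacle is verifying the Chern-class identity $c_1(W'') \cdot g^*H = p\, c_1(W)\cdot g^*H$ when $Z$ is only normal, not smooth. The cleanest route is Kunz's theorem: on the regular locus $Z^{\mathrm{reg}}$, the absolute Frobenius $F_Z$ is flat, so $F_Z^*W \hookrightarrow F_Z^*(g^*E)$ is injective there and $c_1(F_Z^*W|_{Z^{\mathrm{reg}}}) = p\, c_1(W|_{Z^{\mathrm{reg}}})$. The complement $Z \setminus Z^{\mathrm{reg}}$ has codimension $\ge 2$ since $Z$ is normal, so the kernel and torsion contributions have trivial first Chern class, and intersection numbers with $g^*H$ are computed on $Z^{\mathrm{reg}}$. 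Assembling these facts gives the claimed lower bound and completes the proof.
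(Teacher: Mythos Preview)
Your proposal is correct and follows essentially the same route as the paper: the upper bound comes from strong semistability of $F_X^*E$, and the lower bound is obtained by applying the absolute Frobenius $F_Z$ on each test cover $g:Z\to X$, using the identity $g^*(F_X^*E)\cong F_Z^*(g^*E)$ to convert a rank-$k$ subsheaf $W\subset g^*E$ into one of $g^*(F_X^*E)$ with slope multiplied by $p$. The paper's proof is terser and simply asserts $\mu(F_{X_n}^*F_n)/\deg f_n \to \mu(F_X^*E)$; your invocation of Kunz's theorem and the codimension-two argument on a normal $Z$ makes explicit the justification the paper leaves implicit, but no new idea is involved.
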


\begin{proof}
 Since $\nu_k(E)=\mu(E)$, there is a sequence of finite coverings $f_n:X_n\to X$ and a subbundles $F_n$ of $f_n^*E$ rank $k$ such that 
 $\frac{\mu( F_n)}{\deg f_n}\to \mu(E)$. Hence $\frac{\mu(F_{X_n}^* F_n)}{\deg f_n}\to \mu(F_X^*E)$. Since $F_X^*E$ is also strongly semistable, then $\nu_k(F_X^*E)\leq \mu(F_X^*E)$. By definition, 
 $\lim\limits_{n\to\infty}\frac{\mu(F_{X_n}^* F_n)}{\deg f_n}\leq \nu_k(F_X^*E
 )$. Hence the lemma.  
\end{proof}

Now we are ready to prove our main result of this section.
\begin{theorem}
\label{strong-restriction}
 Let $(X,H)$ be a smooth polarized surface over an algebraically closed field of characteristic $p>0$ with $H$ very ample line bundle. Suppose $E$ be a rank $2$ strongly semistable vector bundle.  If $\nu_1(E)=\mu(E)$, then 
 $E|_C$ is strongly semistable for a general smooth  $C\in |H|$.
\end{theorem}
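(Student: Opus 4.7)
The plan is first to prove the weaker statement that $E|_C$ is semistable for general smooth $C\in|H|$, and then upgrade to strong semistability by iterating. Lemma~\ref{lemma-strong-restriction} gives $\nu_1(F_X^{m*}E)=\mu(F_X^{m*}E)$ for every $m\geq 0$, and $F_X^{m*}E$ is again strongly semistable, so the semistability statement applied to $F_X^{m*}E$ produces a dense open $W_m\subseteq|H|$ of curves $C$ for which $F_C^{m*}(E|_C)=(F_X^{m*}E)|_C$ is semistable. Because any destabilizing subsheaf of a bundle $V$ on a curve pulls back to a destabilizing subsheaf of $F_C^{*}V$, we have $W_{m+1}\subseteq W_m$; the ascending chain of complements in $|H|$ stabilizes by noetherianity, so $\bigcap_m W_m=W_{m_0}$ is dense open, and every $C$ in this intersection has $E|_C$ strongly semistable.

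\textbf{Setting up the contradiction.} To prove semistability of $E|_C$ for general $C$, I argue by contradiction. Suppose $E|_C$ is unstable for general $C$. By the generic constancy of Harder--Narasimhan invariants in the flat family of smooth curves over an open $U\subseteq|H|$, there exist a constant $\alpha>0$ and a dense open $V\subseteq U$ such that for every $C\in V$, the bundle $E|_C$ carries a unique destabilizing line subbundle $M_C\subset E|_C$ with $\deg M_C=\mu(E)+\alpha$. From the hypothesis $\nu_1(E)=\mu(E)$, choose finite morphisms $f_n:X_n\to X$ with $X_n$ normal, and saturated rank one subsheaves $L_n\subset f_n^*E$, such that $\deg L_n/\deg f_n\to\mu(E)$. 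For each $n$, Bertini applied to the base-point-free system $f_n^*|H|$ on the smooth locus of $X_n$ (whose complement has dimension at most zero) provides a dense open $V_n\subseteq|H|$ with $C_n':=f_n^{-1}(C)$ smooth for $C\in V_n$; choose $C_n\in V\cap V_n$ and set $g_n:=f_n|_{C_n'}:C_n'\to C_n$, so that $\deg g_n=\deg f_n$.

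\textbf{Case analysis.} I then compare the two saturated line subbundles $L_n|_{C_n'}$ and $g_n^*M_{C_n}$ of the rank two bundle $(f_n^*E)|_{C_n'}=g_n^*(E|_{C_n})$ on the smooth curve $C_n'$; on a smooth curve, two distinct saturated rank one subsheaves of a vector bundle intersect in zero. If the two subbundles coincide, then
\[
\deg L_n \;=\; \deg L_n|_{C_n'} \;=\; \deg g_n^*M_{C_n} \;=\; \deg f_n\cdot(\mu(E)+\alpha),
\]
so $\mu(L_n)>\mu(f_n^*E)$, contradicting the semistability of $f_n^*E$ on $X_n$. If they are distinct, the inclusion $L_n|_{C_n'}\oplus g_n^*M_{C_n}\hookrightarrow (f_n^*E)|_{C_n'}$ forces $\deg L_n|_{C_n'}+\deg g_n^*M_{C_n}\leq 2\deg f_n\cdot\mu(E)$, which rearranges to $\deg L_n/\deg f_n\leq\mu(E)-\alpha$ for every $n$, contradicting $\deg L_n/\deg f_n\to\mu(E)$. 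Both cases being impossible, $E|_C$ must be semistable for general $C$.

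\textbf{Main obstacle.} The delicate points are the relative Harder--Narasimhan argument (to pin down a uniform destabilizing slope $\alpha>0$ on a dense open of $|H|$) and invoking Bertini in positive characteristic to secure smoothness of $f_n^{-1}(C_n)$. A convenient feature of the scheme is that $C_n$ is chosen depending on $n$, so one never has to intersect countably many dense opens at once, avoiding complications over countable ground fields.
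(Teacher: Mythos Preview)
Your overall strategy is sound, and the dichotomy on $C_n'$ (either the two saturated line subbundles coincide, or $L_n|_{C_n'}\oplus g_n^*M_{C_n}\hookrightarrow g_n^*(E|_{C_n})$) is a clean replacement for the paper's $\Hom$-vanishing computation. The noetherian reduction from strong semistability to plain semistability via the descending chain $W_0\supseteq W_1\supseteq\cdots$ is also correct and tidier than the paper's reduction.

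The genuine gap is the Bertini step. In characteristic $p$ the linear system $f_n^*|H|$ need not have \emph{any} smooth member: take $f_n=F_X$, so that $f_n^{-1}(C)=pC$ as a divisor for every $C\in|H|$, which is never even reduced. More generally, Bertini for smoothness in positive characteristic requires the morphism to projective space induced by the linear system to be unramified, and the composite $X_n\xrightarrow{f_n}X\hookrightarrow\mathbb{P}^N$ is ramified wherever $f_n$ is; since the definition of $\nu_1$ ranges over all finite covers with normal source, you cannot restrict to separable $f_n$. Once smoothness of $C_n'$ fails, both of your degree identities break: for the Frobenius example, $(f_n^{-1}C)_{\mathrm{red}}=C$ and the induced map to $C$ has degree $1$, not $p^2=\deg f_n$, and likewise $\deg L_n$ on $X_n$ is $p$ times the degree of $L_n$ restricted to the reduced curve.

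The paper confronts exactly this issue and never asks $D:=f^{-1}(C)$ to be smooth. Instead it cites a result of Zhang to get $D$ geometrically unibranched (hence irreducible) for general $C$, passes to the normalization $\tilde D\to D_{\mathrm{red}}$, and then does the slope comparison on $\tilde D$ while keeping explicit track of the factor relating $\deg\psi$ (the map $\tilde D\to C$) to $\deg f$, using a projection-formula identity from Kleiman. Your argument can be repaired along the same lines --- replace $C_n'$ by the normalization of $(f_n^{-1}C_n)_{\mathrm{red}}$ and carry the multiplicity $m$ through the two cases, noting that $\deg L_n/\deg f_n=\deg(L_n|_{\tilde D})/\deg g_n$ still holds --- but as written the appeal to Bertini is incorrect.
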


\begin{proof}
Let $X$ be a smooth surface with a fixed  very ample polarization $H$. Let $E$ be a rank $2$ strongly semistable vector bundle on 
 $X$. 
 Let $C\in |H|$ be a general smooth curve such that ${F_C^{n}}^{*}E|_{C}$ is not semistable for some $n\geq 0$.  If $n>1$, then replacing $E$ by ${F_X^{n-1}}^*(E)$, and using Lemma~\ref{lemma-strong-restriction} we can assume that 
 $F_C^*E|_{C}$ is not semistable.
 Let
 $$0\to F_1\to F_C^{*}E|_{C}\to F_2\to 0$$
 be the Harder-Narasimhan filtration of $F_C^{*}E|_{C}$.
 
 Choose $0< \epsilon < \mu(F_1)-\mu(F_C^{*}E|_{C})$. Since $\nu_1(E)=\mu(E)$, there exists an $\tilde{X}\stackrel{f}{\to} X$ such that  $\tilde{X}$ is normal, $f$ is finite and an exact sequence 
  $$0\to L_1 \to f^*E\to L_2\to 0$$
 with $L_i$ line bundles and $\frac{\mu(L_2)-\mu(f^*E)}{\deg f}<\epsilon/p$.
 Now consider the curve $D:= f^{-1}C$ in $\tilde{X}$. Since $C$ is general, by \cite{Zhang} $D$ is geometrically unibranched.
 Hence $D$ is irreducible. Let $\pi:\tilde{D}\to D_{red}$ be the normalization of $D_{red}$. Let $\psi$ denote the composite morphism
 $\tilde{D}\to D_{red}\to D \to C$. Note that $\psi$ is also finite.
 
 On $\tilde{D}$ we have

 $$0\to \psi^*F_1\to \psi^*F_C^{*}E|_{C}$$
 and $\deg \psi^*F_1-\deg \psi^*F_C^{*}E|_{C}=\deg \psi (\deg F_1-\deg F_C^{*}E|_{C})>0$. 
 
 Hence $\psi^*F_C^{*}E|_{C}$ is not semistable and $E'=\psi^*F_1$ is  a destabilizing subbundle of  $\psi^*F_C^{*}E|_{C}$.
 
 Let $\tau$ denote the morphism $\tilde{D}\to D_{red}\to D$.
 On $\tilde{D}$, we also have an exact sequence
 $$0\to F_{\tilde{D}}^*(\tau^*(L_1|_D)) \to F_{\tilde{D}}^*(\tau^*(f^*E|_D))\to F_{\tilde{D}}^*\tau^*(L_2|_D))\to 0$$
 
 Now  
 \begin{eqnarray*}
 \frac{1}{\deg \psi}(\mu(E')-\mu(F_{\tilde{D}}^*(\tau^*L_2|_D)))
 &=&\frac{\mu(E')-\mu(\psi^*F_C^*E|_C)+\mu(\psi^*F_C^*E|_C)-\mu(F_{\tilde{D}}^*\tau^*L_2|_D)}{\deg \psi}\\
 &=&\frac{\mu(E')-\mu(\psi^*F_C^*E|_C)+\mu(F_{\tilde{D}}^*\tau^*f^*E|_C)-\mu(F_{\tilde{D}}^*\tau^*L_2|_D)}{\deg \psi}\\
 &=&\mu(F_1)-\mu(F_C^*E|_C)-\frac{p\deg \tau [\mu(f^*E)-\mu(L_2)]}{\deg \psi}\\
 &>&\mu(F_1)-\mu(F_C^*E|_C)-\epsilon\\
 &>&0\\
 \end{eqnarray*}
 where the third equality follows from Proposition $6$ of \cite{Kleiman}.
 Hence $\Hom(E',F_{\tilde{D}}^*\tau^*L_2|_D)=0$.
 Therefore $0\to E'\to \tau^*F_D^*L_1|_D$ and $\mu(E')< \mu(\tau^*F_D^*L_1|_D)<\mu(\tau^*F_D^*(f^*E)|_D)$, contradiction. Hence $F_C^*E|_C$ is semistable for a general smooth $C\in |H|$.
 \end{proof}
 
 \begin{remark}
 \begin{enumerate}
  
  \item
  Now suppose $E$ is a rank $2$ strongly semistable  vector bundle such that
  $\nu_1(E)=\mu(E)$, then $E|_C$ is strongly semistable for a general smooth $C\in|H|$; in particular $E|_C$ is semistable for a general smooth $C\in|H|$. But this not true in general (see Example~\ref{TP^2}). Hence we can not expect $\nu_1(E)=\mu(E)$ for arbitrary rank $2$ strongly semistable bundle.
  
 In general, if $E$ is a strongly semistable vector bundle on a polarized variety $(X,H)$, then whether $E|_C$ is strongly semistable for a very general hypersurface $C$ is an open question.   However by Theorem $3.1$ of \cite{Langer2},  it is known that if $E$ is a strongly stable vector bundle with $\Delta(E)=0$, then $E|_C$ is also strongly stable  for all smooth curve $C\in |H|$. In the  next section we study asymptotic slope for strongly semistable vector bundles with zero discriminant (i.e. $\Delta(E)=0$).
 \item One might expect to get a generalization of Theorem~\ref{strong-restriction} for a vector bundle of arbitrary rank. But at present we are unable to extend the Theorem~\ref{strong-restriction} for  vector bundles of arbitrary rank.
 \end{enumerate}
 \end{remark}

\section{Asymptotic slopes and strong semistability}
 Here in this section we prove analogue of Theorem~\ref{Main-result-curve} for
 strongly semistable vector bundles $E$ of arbitrary rank with zero discriminants i.e. $\Delta(E)=0$ and $c_1(E)=H$. In order to do this we appeal to the  Kodaira type vanishing theorem in characteristic $p>0$, for this we assume some additional condition  (which will be clear from the following) on the polarized surface and on the vector bundle.

Let $X$ be a smooth projective surface over an algebraically closed field $\mathbb{K}$ of characteristic $p>0$. Let $H$ be an ample line bundle.
Let $E$ be a strongly semistable vector bundle of rank $r\geq 2$ on $X$ with respect to the polarization $H$   with $\Delta(E)=0$.
Let $\pi$ denote the natural morphism $\Gr(k,E)\to X$.
We assume that $X, E, H$ admit a lifting $\overline{X}$, $\overline{E}$ and $\overline{H}$ respectively  to $W_2(\mathbb{K})$. Then $\Gr(k,\overline{E})$ is a lifting of $\Gr(k,E)$ to $W_2(\mathbb{K})$.
Indeed since $\overline{E}$ is a lifting, then $\overline{E}\times_{\overline{X}} X\simeq E$ and there is a  natural injection $X\hookrightarrow \overline{X}$. Hence 
$$\Gr(k,\overline{E})\times_{\overline{X}}X=\Gr(k,\overline{E}|_X)=\Gr(k,E).$$ Hence $\Gr(k,\overline{E})$ is a lifting of
$\Gr(k,E)$.

Now we state our main result of this section.

\begin{theorem}
\label{main3}
 Let  $E$ be a strongly semistable vector bundle  of rank $r\geq 2$  with $\Delta(E)=0$ on a smooth polarized surface $(X,H)$ such that $X, E, H$ admit a liftings $\overline{X}$, $\overline{E}$, $\overline{H}$ respectively to $W_2(\mathbb{K})$. Moreover if $c_1(E)=H$, then $E$ is strongly semistable if and only if  $\nu_k(E)=\mu(E)$ for all $k$. 
\end{theorem}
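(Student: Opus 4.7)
The plan is to prove both implications of the claimed equivalence, and to isolate where the extra hypotheses ($\Delta(E)=0$, $c_1(E)=H$, and the $W_2$-lift) are actually needed.

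The converse---that $\nu_k(E)=\mu(E)$ for all $k$ implies strong semistability---does not really need the additional hypotheses. If $E$ were not strongly semistable then by Langer's theorem there is an $n_0$ such that ${F_X^{n_0}}^{*}E$ has a strong Harder--Narasimhan filtration, with top factor a destabilising subbundle $V_{n_0}$ of some rank $k_0$ and slope $\mu_H(V_{n_0})>p^{n_0}\mu(E)$. For $n\geq n_0$ set $f=F_X^{n}$, so $\deg f=p^{2n}$ and $f^{*}H=p^{n}H$; the pull-back ${F_X^{n-n_0}}^{*}V_{n_0}\subset f^{*}E$ has slope $p^{2n-n_0}\mu_H(V_{n_0})$ with respect to $f^{*}H$, hence
\[
\frac{e_{k_0}(f^{*}E)}{\deg f}\;\geq\;p^{-n_0}\mu_H(V_{n_0})\;>\;\mu(E),
\]
contradicting $\nu_{k_0}(E)=\mu(E)$. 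The reverse inequality $\nu_k(E)\leq\mu(E)$ in the forward direction is equally direct, since strong semistability is preserved under any finite pull-back from a normal base and forces every rank-$k$ subsheaf of $f^{*}E$ to have slope at most $(\deg f)\mu(E)$.

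The substantive work is the lower bound $\nu_k(E)\geq\mu(E)$. I would work on the Grassmann bundle $\pi\colon Y=\Gr(k,E)\to X$ with its tautological sequence $0\to\mathcal{S}\to\pi^{*}E\to\mathcal{Q}\to 0$. A rank-$k$ subbundle of $f^{*}E$ for a finite cover $f\colon\tilde X\to X$ is the same data as a section of $f^{*}Y\to\tilde X$, equivalently as an integral surface $Z\subset Y$ finite over $X$; conversely, normalising such a $Z$ produces a finite cover $f_Z\colon\tilde Z\to X$ of degree $\deg(\pi|_Z)$ together with a rank-$k$ subbundle $V_Z\subset f_Z^{*}E$. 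Using the projection formula and $c_1(E)=H$, the slope of $V_Z$ with respect to $f_Z^{*}H$ equals
\[
\mu(V_Z)\;=\;\tfrac{1}{k}\bigl([Z]\cdot c_1(\mathcal{S})\cdot\pi^{*}H\bigr)_Y,
\]
and the desired equality $\mu(V_Z)=(\deg f_Z)\mu(E)$ reduces to the orthogonality condition
\[
[Z]\cdot\bigl(r\,c_1(\mathcal{S})-k\,\pi^{*}H\bigr)\cdot\pi^{*}H\;=\;0.
\]
My plan is thus to produce a sequence of integral surfaces $Z_n\subset Y$, each finite over $X$ via $\pi$, for which the above defect, divided by $\deg f_{Z_n}$, tends to zero.

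The manufacture of $Z_n$ is where the lift to $W_2(\mathbb{K})$ is used: it gives a lift $\Gr(k,\overline E)$ of $Y$, as observed just before the theorem, so that the Deligne--Illusie form of Kodaira vanishing applies on $Y$ and supplies abundant sections of ample line bundles of the form $\mathcal{L}^{a}\otimes\pi^{*}H^{b}$, with $\mathcal{L}=\det\mathcal{Q}$ relatively ample. Since $\dim Y=2+k(r-k)$, I would cut $Z_n$ out as the transverse intersection of $k(r-k)$ divisors from such linear systems chosen in general position, so that $Z_n$ is an integral normal surface finite over $X$.

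The main obstacle will be coupling this geometric construction to the slope estimate. Getting $Z_n$ to have the right dimension and be finite over $X$ is routine once enough sections are available, but forcing the orthogonality defect above to vanish in the limit needs genuine numerical control, and this is exactly where $\Delta(E)=0$ is essential. By Lemma~\ref{discriminant} and Proposition~\ref{delta-zero-implies-bundles} every slope-preserving sub and quotient of a pull-back of $E$ is again a strongly semistable bundle of vanishing discriminant, which pins down the intersection numbers of $c_1(\mathcal{S})$, $c_1(\mathcal{Q})$, and $\pi^{*}H$ on $Y$ tightly enough that, after the right scaling of the twisting parameters $(a,b)$ and a suitable choice of divisors, the normalised defect can be driven to zero as $n\to\infty$. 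Verifying that the sections produced by vanishing really can be chosen so that the cut-out $Z_n$ is a genuine multisection---rather than, say, containing components supported in fibres of $\pi$---is the most delicate step and the one I expect to require the most technical care.
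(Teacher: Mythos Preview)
Your outline is essentially the paper's own approach: pass to $\Gr(k,E)$, use the $W_2$-lift to get Kodaira vanishing and hence very ampleness of a suitable twist (the paper takes the explicit line bundle $\mathcal{O}_{\Gr(k,E)}(n)\otimes\pi^*(-\tfrac{nk}{r}H+n^{1/2}H)$), cut by $k(r-k)$ general sections to obtain smooth surfaces $\tilde X_n$ finite over $X$, and show that the tautological subbundle restricted to $\tilde X_n$ has normalised slope tending to $\mu(E)$. One small correction to your reading: the hypothesis $\Delta(E)=0$ enters not by ``pinning down intersection numbers'' via Lemma~\ref{discriminant} and Proposition~\ref{delta-zero-implies-bundles}, but rather through Lemma~\ref{ample} (ampleness of $\mathcal{O}(1)$ on the Grassmann bundle, needed for the vanishing in Theorem~\ref{H^1-vanishing}) and through Proposition~\ref{generalisation-of-Langer} (strong semistability of $E|_C$ for $C\in|H|$, needed so that the curve result of \cite{BP} applies when proving very ampleness by restriction).
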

\begin{proof}
Here we prove the if direction and give an outline of the only if direction. 

Suppose $\nu_k(E)=\mu(E)$ for all $k$. We will show that $E$ is strongly semistable. If not, there exists a finite morphism $f:\tilde{X}\to X$ and a subsheaf $W\to f^* E$ of rank $k$ such that $\mu(W)>\mu(f^*E)$.  Hence $\nu_k(E)>\mu(E)$, which is a contradiction.

Now we give an outline of the proof of only if direction.
To prove the only if direction of the theorem  we  construct smooth surfaces $f_n:\tilde{X_n}\to X$ and subbundles $V_n\subset f_n^*(E)$  such that $\frac{\mu(V_n)}{\deg f_n}$ converges to  $\mu(E)$.  

To find such surfaces, we consider $\Gr(k,E)$ and let $\pi:\Gr(k,E)\to X$ denote the natural morphism. Let $\mathcal{N}=\{m^2: m\in \mathbb{N}  \textrm{ and } m \textrm{ is divisible by } rH^2\}$. For any $n\in \mathcal{N}$, define  
$\mathcal{L}_n:=-nk\frac{\mu(E)}{H^2}H+n^{1/2}H$.
 Note that  $\mathcal{L}_n$ is a genuine line bundle for all $n\in \mathcal{N}$. We show that $\mathcal{O}_{\Gr(k, E)}(n)\otimes \pi^*\mathcal{L}_n$ is  very ample line bundle for all large $n\in\mathcal{N}$. Then cutting down by appropriate $k(r-k)$ sections of $\mathcal{O}_{\Gr(k,E)}(n)\otimes \pi^*\mathcal{L}_n$ we get desired surfaces. 
 
 
 The strategy to show very ampleness of the line bundles $\mathcal{O}_{\Gr(k,)}(n)\otimes \pi^*\mathcal{L}_n$ is make use of Kodaira vanishing theorem in characteristic $p$ and  Lemma $2.3$ of \cite{BP}. Let $C\in |H|$ be any smooth curve, then $E|_C$ is also strongly semistable by Proposition~\ref{generalisation-of-Langer}. Also 
 $\mu(\mathcal{L}_n|_C)=-nk\mu(E)+n^{1/2}H\cdot H$, hence
  for large $n\in \mathcal{N}$, $\mu(\mathcal{L}_n)>2g-nk\mu(E)$ where $g$ is the genus of $C$, hence by Lemma $2.3$ of \cite{BP}, $\mathcal{O}_{\Gr(k,E|_C)}(n)\otimes \pi^*\mathcal{L}_n|_C$ is very ample on $\pi^{-1}(C)=\Gr(k,E|_C)$. We use the very ampleness of
  $\mathcal{O}_{\Gr(k,E|_C)}(n)\otimes \pi^*\mathcal{L}_n|_C$ to get very ampleness of $\mathcal{O}_{\Gr(k,E)}(n)\otimes \pi^*\mathcal{L}_n$. 
  \end{proof}
  In order to complete the proof of Theorem~\ref{main3} we need several results (up to Proposition 4.10).
  The following two  lemmas might be known to experts, but here we include a proof for the convenience  to the readers.
\begin{lemma}
\label{ample}
 Let $X$ be a smooth projective surface over a field of characteristic
 $p>0$. Let $H$ be an ample line bundle and $V$ be a strongly semistable vector bundle of rank $r$ with 
 $\Delta(V)=0$. Then the line bundle $\mathcal{O}(1)$  on $\Gr(k,V)$ is ample if and only if $\det(V)$ is ample.
\end{lemma}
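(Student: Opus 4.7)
The plan is to prove the two implications separately. The strategy exploits the Plücker embedding for the harder direction and Nakai--Moishezon for the easier one, with the hypothesis $\Delta(V)=0$ entering through the simplification it imposes on all Chern class computations involving $V$ and its tensor constructions.

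For the direction $\mathcal{O}(1)$ ample $\Rightarrow \det V$ ample, I would first show $(\det V)\cdot C>0$ for every irreducible curve $C\subset X$ by restricting to $\pi^{-1}(C)\simeq \Gr(k, V|_C)$: on this Grassmann bundle over a curve, $\mathcal{O}(1)$ is still ample, and the standard criterion for Grassmann bundles over curves forces $V|_C$ to be an ample vector bundle, so $\deg(\det V|_C)>0$. To then conclude $(\det V)^2>0$, compute the top self-intersection $\mathcal{O}(1)^{k(r-k)+2}$ by pushing down along $\pi$; the vanishing of $\Delta(V)$ gives $c_2(V)=\tfrac{r-1}{2r}c_1(V)^2$, so the pushforward formula for $\mathcal{O}(1)^{\bullet}$ on a Grassmann bundle collapses to a strictly positive numerical multiple of $c_1(V)^2$. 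With both Nakai conditions in hand, $\det V$ is ample.

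For the direction $\det V$ ample $\Rightarrow \mathcal{O}(1)$ ample, I would use the Plücker embedding $\Gr(k,V)\hookrightarrow \mathbb{P}(\wedge^{r-k}V)$, under which $\mathcal{O}_{\Gr(k,V)}(1)$ is the restriction of $\mathcal{O}_{\mathbb{P}(\wedge^{r-k}V)}(1)$. Since ampleness restricts to closed subschemes, it suffices to show that $\wedge^{r-k}V$ is an ample vector bundle. Observe that $\wedge^{r-k}V$ is strongly semistable (Frobenius commutes with exterior powers), has vanishing discriminant by the same kind of Chern class computation as in Proposition~\ref{discreminant-properties}(3), and has $\det(\wedge^{r-k}V)=(\det V)^{\binom{r-1}{r-k-1}}$ ample. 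So the question reduces to the following: a strongly semistable bundle $W$ on $X$ with $\Delta(W)=0$ and $\det W$ ample is itself an ample vector bundle. To verify this, apply Nakai on $\mathbb{P}(W)$: on curves contracted to a point of $X$ use fiber ampleness; on curves $Y$ dominating some $C\subset X$ invoke Proposition~\ref{generalisation-of-Langer} to obtain that $W|_C$ is strongly semistable with positive slope (forced by $\det W$ ample), so every line-bundle quotient of $W|_C$ has positive degree and thus $\xi\cdot Y>0$, where $\xi=c_1(\mathcal{O}_{\mathbb{P}(W)}(1))$; the surface and top-dimensional Nakai pieces are handled by the same Segre class computation used in the first direction.

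The main obstacle lies in this last reduction: Proposition~\ref{generalisation-of-Langer} secures semistability of $W|_C$ only for $C\in |H|$, whereas Nakai on $\mathbb{P}(W)$ demands control over every curve $C\subset X$. To close this gap, one either extends the strong restriction statement to $|nH|$ for all $n$ via Frobenius pullback and approximation, or recasts the Nakai conditions on $\Gr(k,V)$ as purely numerical conditions on $c_1(V)$ in $X$, using that $\Delta(V)=0$ forces the Chern character of $V$ (and of every associated Schur bundle) to lie in the span of powers of $c_1(V)$, so that positivity everywhere on $\Gr(k,V)$ descends to positivity of $c_1(V)$ on $X$, which is exactly $\det V$ ample.
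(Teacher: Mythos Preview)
Your Pl\"ucker reduction for $k>1$ matches the paper exactly, and your outline for the direction $\mathcal{O}(1)$ ample $\Rightarrow \det V$ ample is reasonable (the paper in fact does not spell out this easier direction). The gap is in the main direction $\det V$ ample $\Rightarrow \mathcal{O}(1)$ ample, and you have correctly located it yourself: your Nakai--Moishezon argument on $\mathbb{P}(W)$ needs $W|_C$ strongly semistable for \emph{every} irreducible curve $C\subset X$, while Proposition~\ref{generalisation-of-Langer} delivers this only for $C\in|H|$. Neither workaround closes the gap. Passing to $|nH|$ still only reaches curves numerically proportional to $H$, which is far from all curves on a surface. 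The ``numerical recasting'' idea is in the right spirit but is not an argument as stated: you would need a result saying that for a strongly semistable bundle with $\Delta=0$ the nef/ample cone of $\mathbb{P}(W)$ is governed entirely by $c_1(W)$, and that is essentially the content you are trying to prove.

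The paper avoids curve-by-curve verification altogether by a normalization trick. For $k=1$, consider $\Sym^r V\otimes(\det V)^{-1}$: it is strongly semistable, has $\Delta=0$ by Proposition~\ref{discreminant-properties}, and has $c_1=0$ because $\binom{2r-1}{r}=\binom{2r-1}{r-1}=\rk(\Sym^r V)$; hence $c_2=0$ as well. Langer's result (\cite[Proposition~5.1]{Langer2}) then says this bundle is nef. Tensoring with the ample line bundle $\det V$ gives $\Sym^r V$ ample, hence $V$ ample (Hartshorne), hence $\mathcal{O}_{\mathbb{P}(V)}(1)$ ample. The general $k$ follows by applying this $k=1$ case to $\Lambda^k V$ and pulling back along the Pl\"ucker embedding, exactly as you suggested. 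So the missing idea is: rather than checking positivity against every curve, twist to a numerically flat strongly semistable bundle and invoke Langer's nefness criterion once, globally.
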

\begin{proof}
First we prove the lemma for $k=1$. Then $\Gr(1,V)=\mathbb{P}(V)$.
 Note that $\Sym^r V\otimes \det(V)^*$ is also strongly semistable and $\Delta(\Sym^r V\otimes \det(V)^*)=\Delta(\Sym^r V)=0$. 
 Also 
 \begin{eqnarray*}
 c_1(\Sym^r V\otimes \det(V)^*)&=&\binom{2r-1}{r}c_1(V)-\rk(\Sym^r(E))c_1(\det(V))\\
 &=&\binom{2r-1}{r}c_1(V)-\binom{2r-1}{r}c_1(V)\\
 &=&0.
 \end{eqnarray*} Hence $c_2(\Sym^r V\otimes \det(V)^*)=0$. Therefore by Proposition $5.1$ of \cite{Langer2},
 $\Sym^r V\otimes \det(V)^*$ is nef. Now since $\det(V)$ is ample
 $\Sym^r V=\Sym^r V\otimes \det(V)^*\otimes \det(V)$ is also ample.
 Hence $V$ is ample by Proposition $2.4$ of \cite{Hartshorne-ample}.

 Now we prove the lemma for $k>1$. Note that $\Gr(k,V)$ embeds in $\mathbb{P}(\Lambda^kV)$ by  Pl\"{u}cker embedding and $\mathcal{O}_{\mathbb{P}(\Lambda^kV)}(1)$ pulls back to $\mathcal{O}_{\Gr(k,V)}(1)$. Hence in order to show $\mathcal{O}_{\Gr(k,V)}(1)$ ample it is enough to show that $\mathcal{O}_{\mathbb{P}(\Lambda^kV)}(1)$ is ample. Now as $V$ is strongly semistable, then $\Lambda^kV$ is so.
 Also $\Delta(\Lambda^k V)=0$, since $\Delta(V)=0$ (by Lemma~\ref{discriminant}). Then by above paragraph $\mathcal{O}_{\mathbb{P}(\Lambda^kV)}(1)$ is ample.  Hence the lemma.
 
 
 
\end{proof}
\begin{lemma}
\label{det-of-tensor}
 Let $E$ be vector bundle on $X$, then $$c_1(E^{\otimes n})=nr^{n-1}c_1(E).$$
\end{lemma}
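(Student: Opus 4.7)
The plan is to prove this by induction on $n$ using the standard formula for the first Chern class of a tensor product, namely $c_1(A \otimes B) = \rk(B)\, c_1(A) + \rk(A)\, c_1(B)$ for any two vector bundles $A$ and $B$ on $X$. This formula follows from the fact that $\det(A \otimes B) \cong (\det A)^{\otimes \rk B} \otimes (\det B)^{\otimes \rk A}$, which can be verified on splittings (via the splitting principle) or by a direct computation with transition functions.

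With this in hand, the induction is straightforward. The base case $n = 1$ reads $c_1(E) = 1 \cdot r^{0} \cdot c_1(E)$, which is trivial. Assuming the identity $c_1(E^{\otimes n}) = n r^{n-1} c_1(E)$, one writes $E^{\otimes (n+1)} = E^{\otimes n} \otimes E$ and applies the tensor formula, noting that $\rk(E^{\otimes n}) = r^n$:
\begin{align*}
c_1(E^{\otimes (n+1)}) &= \rk(E)\, c_1(E^{\otimes n}) + \rk(E^{\otimes n})\, c_1(E) \\
&= r \cdot n r^{n-1} c_1(E) + r^n\, c_1(E) \\
&= (n+1)\, r^n\, c_1(E),
\end{align*}
which is exactly the desired formula with $n$ replaced by $n+1$.

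There is no real obstacle here; the only ingredient beyond induction is the tensor product formula for $c_1$, which is a standard fact and surely could be cited from any reference on Chern classes (e.g. Fulton's \emph{Intersection Theory}). Since the bookkeeping is trivial, the proof as given above should suffice.
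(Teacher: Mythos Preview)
Your proof is correct and follows exactly the same approach as the paper: both use repeated application of the identity $c_1(V_1\otimes V_2)=r_2\,c_1(V_1)+r_1\,c_1(V_2)$, with your version simply making the induction explicit.
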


 \begin{proof}
The proof follows from the repeated application of the following formula:
given two vector bundles $V_1$ and $V_2$ of rank $r_1$ and $r_2$ respectively, $$c_1(V_1\otimes V_2)= r_2 c_1(V_1)+ r_1 c_1(V_2).$$
 \end{proof}
The following Theorem plays a crucial role in proving very ampleness of $\mathcal{O}_{\Gr(k,E}(n)\otimes \pi^*\mathcal{L}_n$.
\begin{theorem}
\label{H^1-vanishing}
With the same hypothesis as in Theorem~\ref{main3},
fix $m\geq 1$.
For any smooth curve $C\in |mH|$,
 the cohomology module $$H^1(\Gr(k,E), \mathcal{O}(n)\otimes \pi^* \mathcal{L}_n(-C))=0$$ 
 for large $n\in \mathcal{N}$, where $\mathcal{L}_n=-\frac{nk}{r}H+n^{1/2} H$.
\end{theorem}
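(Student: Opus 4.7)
The plan is to prove the vanishing via Kodaira's theorem on $Y = \Gr(k,E)$ in characteristic $p$, which is accessible because $\Gr(k,\overline{E})$ provides a $W_2(\mathbb{K})$-lifting of $Y$. Since $\dim Y = 2 + k(r-k) \geq 3$, the Serre-dual form of Kodaira gives $H^1(Y,L) = 0$ for any line bundle $L$ with $L \otimes \omega_Y^{-1}$ ample. I therefore reduce the problem to showing that
\[
\mathcal{A}_n \;:=\; \mathcal{O}_Y(n) \otimes \pi^*\mathcal{L}_n(-C) \otimes \omega_Y^{-1}
\]
is ample on $Y$ for $n \in \mathcal{N}$ sufficiently large.

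First, I would compute $\omega_Y^{-1}$. Using the tautological sequence $0 \to S \to \pi^*E \to Q \to 0$ on $Y$ with $\mathcal{O}_Y(1) = \det Q$ via Pl\"ucker, a direct calculation yields $\omega_Y^{-1} = \mathcal{O}_Y(r) \otimes \pi^*(-kH - \omega_X)$. Substituting and using $C \in |mH|$, this rewrites $\mathcal{A}_n$ as
\[
\mathcal{A}_n \;=\; \mathcal{O}_Y(n+r) \otimes \pi^*\!\bigl(\bigl(-\tfrac{nk}{r} + n^{1/2} - m - k\bigr)H - \omega_X\bigr),
\]
which has exactly the same shape as the line bundle $\mathcal{O}_Y(n) \otimes \pi^*\mathcal{L}_n$ from Theorem~\ref{main3}, up to a bounded additive correction $-(m+k)H - \omega_X$ independent of $n$.

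Second, I would verify ampleness of $\mathcal{A}_n$ by realizing it (rationally) as $\mathcal{O}_{\Gr(k,\widetilde{E})}(n+r)$ for the $\mathbb{Q}$-twist $\widetilde{E} = E \otimes \mathcal{M}$, where $\mathcal{M}$ is the $\mathbb{Q}$-line bundle determined by $k(n+r)\mathcal{M} \equiv \bigl(-\tfrac{nk}{r} + n^{1/2} - m - k\bigr)H - \omega_X$. A direct computation gives $\det\widetilde{E}$ proportional, with positive rational coefficient, to $(n^{1/2} - m)H - \omega_X$, which is ample for $n$ large since $n^{1/2}H$ dominates the fixed class $(m+k)H + \omega_X$. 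Strong semistability and vanishing discriminant are preserved under line-bundle twists (Proposition~\ref{discreminant-properties}(1)), so $\widetilde{E}$ inherits these properties, and Lemma~\ref{ample}, applied rationally (equivalently, after passing to a suitable integer power of $\mathcal{A}_n$ to clear denominators), yields ampleness of $\mathcal{A}_n$. Kodaira vanishing on $Y$ then gives the desired $H^1$ vanishing.

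The main obstacle is the second step: rigorously handling the rational twist, i.e., reducing the ampleness of $\mathcal{A}_n$ to a $\mathbb{Q}$-version of Lemma~\ref{ample} applied to $\widetilde{E}$, and arguing that the $\mathbb{Q}$-bundle $\widetilde{E}$ still behaves well with respect to semistability and discriminant. The divisibility conditions defining $\mathcal{N}$ (namely $n = m^2$ with $rH^2 \mid m$, so that both $n^{1/2}$ and $n/r$ are integers) ensure at least that $\mathcal{A}_n$ is a genuine integral line bundle, which is what lets ampleness be checked after clearing denominators. A secondary concern is ensuring that Deligne--Illusie's form of Kodaira vanishing applies to $Y = \Gr(k,E)$, which typically requires the characteristic to be sufficiently large relative to $\dim Y$; this is presumably covered by the standing hypotheses of the section.
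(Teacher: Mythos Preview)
Your overall strategy is correct and matches the paper's: reduce the $H^1$-vanishing to showing that $\mathcal{O}_Y(n)\otimes\pi^*\mathcal{L}_n(-C)\otimes\omega_Y^{-1}$ is ample on $Y=\Gr(k,E)$, then invoke the Deligne--Illusie form of Kodaira vanishing (using the $W_2$-lifting $\Gr(k,\overline E)$) together with Serre duality. Your explicit computation of $\omega_Y^{-1}=\mathcal{O}_Y(r)\otimes\pi^*(-kH-\omega_X)$ is right, and your determinant calculation for the $\mathbb{Q}$-twist $\widetilde E$ is essentially correct (the leftover class is a positive rational multiple of $(n^{1/2}-m)H-\omega_X$, ample for $n\gg0$).

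The paper, however, organises the ampleness step differently and thereby sidesteps the $\mathbb{Q}$-twist issue you flag as the main obstacle. It proceeds in two stages. First (Step~1) it proves, for \emph{any} ample line bundle $L$ on $X$, that $\mathcal{O}_Y(n)\otimes\pi^*(-\tfrac{nk}{r}H)\otimes\pi^*L$ is ample: via the Pl\"ucker embedding $\Gr(k,E)\hookrightarrow\mathbb{P}(\Lambda^kE)$ it suffices to see that $\Sym^n(\Lambda^kE)\otimes(-\tfrac{nk}{r}H)\otimes L$ is ample, and this follows from Lemma~\ref{ample} applied to the \emph{integral} bundle $E^{\otimes nk}\otimes(-\tfrac{nk}{r}H)\otimes L$ (which is strongly semistable with $\Delta=0$ and ample determinant), of which the former is a quotient. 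Second (Step~2), the anticanonical is handled by a fixed auxiliary integer $s$ with $\mathcal{O}_Y(s)\otimes\omega_Y^{-1}$ ample (possible since $\mathcal{O}_Y(1)$ is ample by Lemma~\ref{ample}); one then writes $\mathcal{A}_n$ as a tensor product of $\mathcal{O}_Y(n-s)\otimes\pi^*(-\tfrac{(n-s)k}{r}H)\otimes\pi^*L'$ (ample by Step~1 for a suitable ample $L'$) with $\mathcal{O}_Y(s)\otimes\omega_Y^{-1}$. This decomposition keeps every bundle integral throughout and applies Lemma~\ref{ample} only in its stated form. Your route is conceptually more direct, but the paper's avoids having to justify a $\mathbb{Q}$-version of Lemma~\ref{ample}; if you prefer your approach, the cleanest fix is to pass to a large integer power of $\mathcal{A}_n$ so that $\mathcal{M}$ becomes an honest line bundle before invoking Lemma~\ref{ample}. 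Your secondary concern about the $p>\dim Y$ hypothesis in Deligne--Illusie is legitimate; the paper simply cites \cite{Esnault} and leaves this implicit.
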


\begin{proof}
In the proof of this theorem we will make use to Kodaira type vanishing theorem in characteristic $p>0$.
We prove the theorem in two steps.\\
Let $L$ be an ample line bundle on $X$. Then in the first step we will show that $\mathcal{O}_{\Gr(k,E)}(n)\otimes\frac{-nk}{r} \pi^*H\otimes\pi^*L$ is ample for large $n\in \mathcal{N}$. In the next step using first step  and  Kodaira type vanishing theorem  along with Serre duality we will conclude the theorem.

$\mathbf{Step 1:}$ 
We have the following commutative diagram
$$\xymatrix{
\Gr(k,E) \ar^{\pi}[rd] \ar@{^{(}->}^i[r]
& \mathbb{P}(\Lambda^k(E))\ar^{\pi_1}[d]\\
&X}$$
 such that $\mathcal{O}_{\Gr(k,E)}(n)\otimes\frac{-nk}{r}\pi^*H\otimes \pi^* L=i^*(\mathcal{O}_{\mathbb{P}(\Lambda^k(E))}(n)\otimes\frac{-nk}{r} \pi_1^*H\otimes \pi_1^*L)$. Hence in order to show 
 $\mathcal{O}_{\Gr(k,E)}(n)\otimes\frac{-nk}{r}\pi^*H\otimes\pi^*L$ is ample on $\Gr(k,E)$
 it is sufficient to show that $\mathcal{O}_{\mathbb{P}(\Lambda^k(E))}(n)\otimes\frac{-nk}{r} \pi_1^*H\otimes \pi_1^*L$ is ample on $\mathbb{P}(\Lambda^k{E})$.
 
Using Lemma~\ref{det-of-tensor}, one can check that $\det(E^{\otimes nk}\otimes\frac{-nk}{r}H\otimes L)$ is ample. Since $\Delta (E)=0$, hence $\Delta(E^{\otimes nk})=0$, and thus by Proposition~\ref{discreminant-properties}$(1)$ we have

$$\Delta(E^{\otimes nk}\otimes\frac{-nk}{r}H\otimes L)=0.$$

Hence by Lemma~\ref{ample}, $E^{\otimes nk}\otimes\frac{-nk}{r}H\otimes L$ is ample.
Since quotient of an ample bundle is ample, it follows that $\Sym^n(\Lambda^k(E))\otimes\frac{-nk}{r}H\otimes L$ is also ample.

Now consider the following commutative diagram

$$\xymatrix{
\mathbb{P}(\Lambda^k(E)) \ar^{\pi_1}[rd] \ar@{^{(}->}^i[r]
& \mathbb{P}(\Sym^n(\Lambda^k(E)))\ar^{\tilde{\pi_1}}[d]
&\mathbb{P}(\Sym^n(\Lambda^k(E))\otimes\frac{-nk}{r}H\otimes L)
\ar[l]\ar^{\tilde{\tilde{\pi_1}}}[ld]\\
&X}$$
such that 
\begin{eqnarray*}
\mathcal{O}_{\mathbb{P}(\Lambda^k(E))}(n)\otimes\frac{-nk}{r}\pi_1^*H\otimes \pi_1^*{L}&
=&i^{*}[\mathcal{O}_{\mathbb{P}(\Sym^n(\Lambda^k(E)))}(1)\otimes\frac{-nk}{r}{\tilde{\pi_1}}^*H\otimes {\tilde{\pi_1}}^*L]\\
&=&i^{*}[\mathcal{O}_{\mathbb{P}(\Sym^n(\Lambda^k(E)))}(1)]\otimes\frac{-nk}{r}\pi_1^*H\otimes \pi_1^*L.
\end{eqnarray*}
Since $\mathcal{O}_{\mathbb{P}(\Sym^n(\Lambda^k(E))\otimes\frac{-nk}{r}H\otimes L)}(1)$ is ample on $\mathbb{P}(\Sym^n(\Lambda^k(E))\otimes\frac{-nk}{r}H\otimes L)$,
$\mathcal{O}_{\mathbb{P}(\Sym^n(\Lambda^k(E))}(1)\otimes\frac{-nk}{r}\pi_1^*H\otimes\tilde{\pi_1}^*L$ is ample on $\mathbb{P}(\Sym^n(\Lambda^k(E))$.
Hence   $\mathcal{O}_{\mathbb{P}(\Lambda^k(E))}(n)\otimes\frac{-nk}{r}\pi_1^*H\otimes \pi_1^*{L}$ is ample on $\mathbb{P}(\Lambda^k(E))$.
Thus  $\mathcal{O}_{\Gr(k,E)}(n)\otimes\frac{-nk}{r}\pi^*H\otimes\pi^*{L}$ is ample on 
$\Gr(k,E)$.

$\mathbf{Step 2:}$
Let $C\in |mH|$ be any smooth curve. Consider the short exact sequence

$$0\to\mathcal{O}(-\pi^{-1}C)\to \mathcal{O}_{\Gr(k,E)}\to 
\mathcal{O}_{\pi^{-1}C}\to 0.$$
which gives long exact sequence  in homology modules,

$$
 0\to H^0(\Gr(k,E), \mathcal{O}(n)\otimes \pi^*(\mathcal{L}_n(-C)))\to H^0(\Gr(k,E), \mathcal{O}(n)\otimes \pi^*\mathcal{L}_n)\to\hphantom{njnkbhkb} $$
$$H^0(\Gr(k,E|_C), \mathcal{O}(n)\otimes \mathcal{L}_n|_{\pi^{-1}C})\to H^1(\Gr(k,E), \mathcal{O}(n)\otimes \pi^* \mathcal{L}_n(-C)) \to \cdots$$

Now we claim that $ H^1(\Gr(k,E), \mathcal{O}_n\otimes \pi^*(\mathcal{L}_n(-C)))=0$ for all large enough $n\in \mathcal{N}$.

Note that $\mathcal{O}_{\Gr(k,E)}(n)\otimes\pi^*(\mathcal{L}_n(-C))=\mathcal{O}_{\Gr(k,E)}(n)\otimes\pi^*\frac{-nk}{r}H\otimes \pi^*[n^{1/2}H\otimes\mathcal{O}(-C)]$.
Now we can choose $n \in \mathcal{N}$ and $n\gg 0$  such that, $n^{1/2}H\otimes \mathcal{O}(-C)$ is ample. Take $L_n=n^{1/2}H\otimes \mathcal{O}(-C)$. Hence by Step 1, 
$\mathcal{O}_{\Gr(k,E)}(n)\otimes \pi^*({\mathcal{L}_n(-C)})$ is ample for all $n\gg 0$ with $n \in \mathcal{N}$.
Again we can choose $n\in N$ sufficiently large such that 
$\mathcal{O}_{\Gr(k,E)}(n)\otimes \pi^*({\mathcal{L}_n(-C)})\otimes K^*_{\Gr(k,E)}$  is also ample. This can be seen as follows:  since
$\Delta(E)=0$ and $\det(E)$ is ample,
by Lemma~\ref{ample},  
$\mathcal{O}_{\Gr(k,E)}(1)$ is ample on $\Gr(k,E)$.
Hence for $K_{\Gr(k,E)}^*$, there exists $s$ divisible by $r(H.H)$ such that $\mathcal{O}_{\Gr(k,E)}(s)\otimes K_{\Gr(k,E)}^*$ is ample. We can choose $n\in \mathcal{N}$ large such that $(n^{1/2}-\frac{sk}{r})H(-C)$ is ample. Hence
$$
 \mathcal{O}_{\Gr(k,E)}(n)\otimes \pi^*({\mathcal{L}_n(-C)})\otimes K^*_{\Gr(k,E)}\phantom{asdfgjklcdefrvgbthnyjxcddvfffffbgeeeeee}$$
$$=\mathcal{O}_{\Gr(k,E)}(n-s)\otimes \pi^*\frac{-(n-s)k}{r}H\otimes\pi^*((n^{1/2}-\frac{sk}{r})H(-C))\otimes\mathcal{O}_{\Gr(k,E)}(s)\otimes K^*_{\Gr(k,E)}$$

 Now both the  line bundles $\mathcal{O}_{\Gr(k,E)}(n-s)\otimes \pi^*\frac{-(n-s)k}{r}H\otimes\pi^*((n^{1/2}-\frac{sk}{r})H(-C))$, $\mathcal{O}_{\Gr(k,E)}(s)\otimes K^*_{\Gr(k,E)}$ are ample, hence $\mathcal{O}_{\Gr(k,E)}(n)\otimes \pi^*({\mathcal{L}_n(-C)})\otimes K^*_{\Gr(k,E)}$  is also ample for large $n$.
Then Kodaira type vanishing theorem says \cite{Esnault}, $ H^i(\Gr(k,E), \mathcal{O}(-n)\otimes (\pi^*(\mathcal{L}_n(-C)))^*\otimes K_{\Gr(k,E)})=0$ for all large enough $n\in \mathcal{N}$ and for all $i<\dim \Gr(k,E)$. Hence by Serre duality
the claim follows.
\end{proof}

 Next we prove that $\mathcal{O}_{\Gr(k,E)}(n)\otimes \pi^* \mathcal{L}_n$ is very ample,  for all large $n\in \mathcal{N}$.  But first  we show 
 a useful lemma.
 
 \begin{lemma}
 \label{lemma-3}
 Let $X$ be any projective variety and $H$ be a very ample divisor on $X$.  Let $x\in X$,  and $\tau
 :Bl_x(X)\to X$ denote the blow up of $X$ at $x$ with $E$ as exceptional divisor. Then $2\tau^*H-E$ is very ample on $Bl_x(X)$. 
\end{lemma}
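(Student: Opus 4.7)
The strategy is to reduce to the well-known case of the blowup of projective space, via the embedding furnished by $|H|$. Since $H$ is very ample, it gives a closed embedding $X \hookrightarrow \mathbb{P}^N = \mathbb{P}(H^0(X,H)^*)$ with $\mathcal{O}_X(1) = H$; let $\tilde x \in \mathbb{P}^N$ denote the image of $x$, and let $\sigma : Y := Bl_{\tilde x}\mathbb{P}^N \to \mathbb{P}^N$ be the ambient blowup with exceptional divisor $E_Y \cong \mathbb{P}^{N-1}$. Because blowups commute with closed embeddings through the center (here one uses that $X$ is smooth at $x$, which is the case of interest in the paper since it is applied on Grassmannians), the strict transform of $X$ inside $Y$ is naturally identified with $Bl_x(X)$, the exceptional divisor $E$ is the scheme-theoretic intersection $Bl_x(X) \cap E_Y$, and the line bundle $2\tau^*H - E$ is the restriction of $2\sigma^*\mathcal{O}_{\mathbb{P}^N}(1) - E_Y$. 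It is therefore enough to prove that $2\sigma^*\mathcal{O}_{\mathbb{P}^N}(1) - E_Y$ is very ample on $Y$.

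To do this, I would use a Segre-type description of $Y$. The linear projection $\mathbb{P}^N \dashrightarrow \mathbb{P}^{N-1}$ from $\tilde x$ is resolved by $\sigma$ to a morphism $\phi : Y \to \mathbb{P}^{N-1}$ whose fibers are the strict transforms of lines through $\tilde x$. I would show that the product morphism
\[
(\sigma,\phi) : Y \longrightarrow \mathbb{P}^N \times \mathbb{P}^{N-1}
\]
is a closed embedding. Injectivity on points is immediate: $\sigma$ is an isomorphism away from $E_Y$, while $\phi|_{E_Y} : E_Y \xrightarrow{\sim} \mathbb{P}^{N-1}$ is an isomorphism. Injectivity on tangent vectors needs to be checked at points of $E_Y$: here $d\sigma$ recovers the one normal direction to $E_Y$, while $d\phi$ restricted to $T_pE_Y$ is an isomorphism to $T_{\phi(p)}\mathbb{P}^{N-1}$, so the joint differential is injective.

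Next I would compute the pullback of the Segre polarization. Evidently $\sigma^*\mathcal{O}_{\mathbb{P}^N}(1) = \sigma^* H_{\mathbb{P}^N}$. To identify $\phi^*\mathcal{O}_{\mathbb{P}^{N-1}}(1)$, intersect with a $\phi$-fiber: the strict transform $\widetilde L$ of a line through $\tilde x$ has $\widetilde L \cdot \sigma^*H_{\mathbb{P}^N} = 1$ and $\widetilde L \cdot E_Y = 1$, so $\widetilde L \cdot (\sigma^*H_{\mathbb{P}^N} - E_Y) = 0$, forcing $\phi^*\mathcal{O}_{\mathbb{P}^{N-1}}(1) = \sigma^*H_{\mathbb{P}^N} - E_Y$ in $\mathrm{Pic}(Y)$. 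Consequently
\[
(\sigma,\phi)^*\mathcal{O}_{\mathbb{P}^N \times \mathbb{P}^{N-1}}(1,1) \;=\; 2\sigma^*H_{\mathbb{P}^N} - E_Y.
\]
Since $\mathcal{O}(1,1)$ is very ample on $\mathbb{P}^N \times \mathbb{P}^{N-1}$ (by the Segre embedding) and $(\sigma,\phi)$ is a closed embedding, its pullback $2\sigma^*H_{\mathbb{P}^N} - E_Y$ is very ample on $Y$; restricting to the closed subvariety $Bl_x(X)\subset Y$ yields very ampleness of $2\tau^*H - E$.

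\emph{Main obstacle.} The only nonformal step is verifying that $(\sigma,\phi)$ is a closed embedding, and in particular that its differential is injective at every point of the exceptional divisor $E_Y$, where neither factor alone is an immersion. Once that is in place, the rest is a straightforward identification of divisor classes and an appeal to the Segre embedding.
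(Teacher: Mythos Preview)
Your approach is essentially identical to the paper's: reduce to the blowup of $\mathbb{P}^N$ via the embedding given by $|H|$, and then realize $Bl_{\tilde x}(\mathbb{P}^N)$ as a closed subvariety of $\mathbb{P}^N\times\mathbb{P}^{N-1}$ via the pair (blow-down, projection-from-$\tilde x$), pulling back the Segre $\mathcal{O}(1,1)$. The paper simply asserts that this product map is ``the natural morphism $Bl_x(\mathbb{P}^N)\hookrightarrow \mathbb{P}^N\times\mathbb{P}^{N-1}$'' and that the associated line bundle is $2\tilde\tau^*\mathcal{O}(1)-\tilde E$; you supply the verifications the paper omits, which is fine.

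Two small remarks. First, the smoothness caveat you insert is unnecessary for the reduction step: for any closed immersion $X\hookrightarrow\mathbb{P}^N$ the strict transform is the blow-up of $X$ along the pulled-back ideal, and $\mathcal{O}_Y(-E_Y)$ restricts to $\mathcal{O}_{Bl_x(X)}(-E)$ regardless of whether $x$ is a smooth point. Second, your identification $\phi^*\mathcal{O}_{\mathbb{P}^{N-1}}(1)=\sigma^*H_{\mathbb{P}^N}-E_Y$ from the single relation $\widetilde L\cdot(\sigma^*H-E_Y)=0$ is one equation short: in $\mathrm{Pic}(Y)=\mathbb{Z}\sigma^*H\oplus\mathbb{Z}E_Y$ that only gives $\phi^*\mathcal{O}(1)=a(\sigma^*H-E_Y)$. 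Intersecting with a line $\ell\subset E_Y$ (or restricting to $E_Y$, where $\phi$ is an isomorphism and $\mathcal{O}_Y(-E_Y)|_{E_Y}=\mathcal{O}_{E_Y}(1)$) pins down $a=1$. With that adjustment your argument is complete and matches the paper's.
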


\begin{proof}
 Since $H$ is very ample, with respect to a fixed embedding $X$ can be realized as a closed subvariety of $\mathbb{P}^N$   for some $N\in \mathbb{N}$.
 Hence $Bl_x(X)\subseteq Bl_x(\mathbb{P}^{N})$. Let $\tilde{E}$ denote the exceptional divisor of 
 $\tilde{\tau}: Bl_x(\mathbb{P}^N)\to \mathbb{P}^N$. Now in order to show that $2\tau^*H-E$ is very ample on $Bl_x(X)$, it is enough to show that $2\mathcal{O}_{\mathbb{P}^N}(1)-\tilde{E}$ is very ample on $Bl_x(\mathbb{P}^N)$.
 
 Note that $\mathcal{O}_{\mathbb{P}^N}(1)-\tilde{E}$ gives the projection morphism from $Bl_x(\mathbb{P}^N)\to \mathbb{P}^{N-1}$ and 
 $\mathcal{O}_{\mathbb{P}^N}(1)$ gives the morphism from 
 $Bl_x(\mathbb{P}^N)\to \mathbb{P}^{N}$. Hence together 
 $2\tilde{\tau}^*\mathcal{O}_{\mathbb{P}^N}(1)-\tilde{E}$ gives a morphism 
 $Bl_x(\mathbb{P}^N)\to \mathbb{P}^N\times\mathbb{P}^{N-1}$ which is the natural morphism of $Bl_x(\mathbb{P}^N)\hookrightarrow \mathbb{P}^N\times\mathbb{P}^{N-1}$.
 Hence $2\tilde{\tau}^*\mathcal{O}_{\mathbb{P}^N}(1)-\tilde{E}$ is very ample on $Bl_x(\mathbb{P}^N)$.
\end{proof}

\begin{theorem}
 The line bundle $\mathcal{O}_{\Gr(k,E)}(n)\otimes \pi^* \mathcal{L}_n$ is very ample,  for all large $n\in \mathcal{N}$, where $\mathcal{L}_n=-\frac{nk}{r}H+n^{1/2} H$.
\end{theorem}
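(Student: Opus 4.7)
The plan is to establish very ampleness of $L_n:=\mathcal{O}_{\Gr(k,E)}(n)\otimes\pi^*\mathcal{L}_n$ by proving, for $n\in\mathcal{N}$ sufficiently large, that $L_n$ separates points and tangent vectors on $\Gr(k,E)$. The main engine is the restriction to $\pi^{-1}(C)$ for a smooth curve $C\in|mH|$: from the short exact sequence
$$0\to L_n\otimes \mathcal{O}(-\pi^{-1}C)\to L_n\to L_n|_{\pi^{-1}C}\to 0$$
and the vanishing $H^1(\Gr(k,E),L_n\otimes\mathcal{O}(-\pi^{-1}C))=0$ given by Theorem~\ref{H^1-vanishing}, we obtain a surjection $H^0(\Gr(k,E),L_n)\twoheadrightarrow H^0(\pi^{-1}C,L_n|_{\pi^{-1}C})$ for all large $n\in\mathcal{N}$. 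The restriction $L_n|_{\pi^{-1}C}$ is very ample on $\pi^{-1}C=\Gr(k,E|_C)$ by Proposition~\ref{generalisation-of-Langer} combined with Lemma $2.3$ of \cite{BP}, as recorded in the proof outline of Theorem~\ref{main3}. Consequently every length-two subscheme of $\Gr(k,E)$ contained in $\pi^{-1}C$ is separated by sections of $L_n$ lifted from $\pi^{-1}C$.

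It therefore suffices to show that \emph{every} length-two subscheme $Z\subset\Gr(k,E)$ is contained in $\pi^{-1}C$ for some smooth $C\in|mH|$ with $m$ uniformly bounded. For two distinct points $p,q\in\Gr(k,E)$, this amounts to finding a smooth $C\in|mH|$ through both $\pi(p)$ and $\pi(q)$; since $H$ is very ample, Bertini applied to the sub-linear system $|mH-\pi(p)-\pi(q)|$ yields such a $C$ for $m$ sufficiently large. For a tangent vector $v\in T_p\Gr(k,E)$, decompose along the differential $d\pi:T_p\Gr(k,E)\to T_{\pi(p)}X$: if $d\pi(v)=0$, then $v\in T_p\pi^{-1}(C)$ for any smooth $C$ through $\pi(p)$ and the argument above applies; if $d\pi(v)\neq 0$, one must produce a smooth $C\in|mH|$ through $\pi(p)$ with tangent direction $d\pi(v)$ at $\pi(p)$.

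The construction of the tangent-directed curve is precisely where Lemma~\ref{lemma-3} enters. Blow up $X$ at $\pi(p)$ to obtain $\tau\colon\tilde X\to X$ with exceptional divisor $\mathcal{E}\cong\mathbb{P}^1$, whose points parametrize tangent directions at $\pi(p)$. By Lemma~\ref{lemma-3}, $2\tau^*H-\mathcal{E}$ is very ample on $\tilde X$, hence so is $m\tau^*H-\mathcal{E}$ for every $m\geq 2$. Let $e\in\mathcal{E}$ correspond to $d\pi(v)$. Bertini applied to the sub-linear system of $|m\tau^*H-\mathcal{E}|$ of divisors through $e$ produces a smooth member whose image under $\tau$ is a smooth curve $C\in|mH|$ through $\pi(p)$ tangent to $d\pi(v)$. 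The length-two subscheme defined by $v$ then lies inside $\pi^{-1}C$, and the restriction argument separates it.

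Finally, the threshold on $n$ coming from Theorem~\ref{H^1-vanishing} depends only on $m$, which can be fixed (e.g., $m=2$) independently of the chosen length-two subscheme; so for all sufficiently large $n\in\mathcal{N}$ the bundle $L_n$ separates every pair of points and every tangent vector, proving very ampleness. The principal obstacle is the tangent-direction case: without the blow-up construction of Lemma~\ref{lemma-3} giving a smooth curve in a fixed linear system $|mH|$ with a prescribed tangent direction, one cannot reduce the tangent-vector separation to the Grassmannian-over-a-curve very ampleness of the first step. Once that geometric input is in hand, the rest of the argument is cohomological, following from the $H^1$ vanishing already established.
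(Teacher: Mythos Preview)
Your proposal is correct and follows essentially the same route as the paper: the $H^1$-vanishing plus restriction surjection, very ampleness of $L_n|_{\Gr(k,E|_C)}$ via \cite{BP}, the case split on $d\pi(v)$, and the blow-up construction from Lemma~\ref{lemma-3} to manufacture a smooth curve with prescribed tangent direction. One small point you leave implicit is why the image in $X$ of your smooth member of $|m\tau^*H-\mathcal{E}|$ is itself smooth at $\pi(p)$: the paper notes $(m\tau^*H-\mathcal{E})\cdot\mathcal{E}=1$, so the curve meets $\mathcal{E}$ transversally in a single point and hence blows down to a smooth curve; without this the image could acquire a singularity at the center of the blow-up.
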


\begin{proof}
In order to show that the line bundle $\mathcal{O}_{\Gr(k,E)}(n)\otimes \pi^* \mathcal{L}_n$ is very ample, we need to show that the line bundle separates points and separates tangent vectors.

First we will show that  $\mathcal{O}_{\Gr(k,E)}(n)\otimes \pi^* \mathcal{L}_n$ separates points.
Take two points $z_1,z_2\in {\Gr(k,E)}$. Let $\pi(z_1)=x,\pi(z_2)=y$. By Theorem $3.1$ of \cite{SB}, one can choose a smooth curve $C\in|mH|$ that contains $x,y$.
Consider the short exact sequence of sheaves:
$$0\to \mathcal{O}_{\Gr(k,E)}(n)\otimes \pi^* \mathcal{L}_n(-C)\to \mathcal{O}_{\Gr(k,E)}(n)\otimes \pi^* \mathcal{L}_n\to 
\mathcal{O}_{\Gr(k,E|_C)}(n)\otimes \pi^* \mathcal{L}_n|_{\Gr(k,E|_C)}\to  0$$
which yields the following long exact sequence in homology:
$$\cdots\to H^0(\Gr(k,E), \mathcal{O}(n)\otimes \pi^* \mathcal{L}_n)
\to H^0(\Gr(k,E|_C), \mathcal{O}(n)\otimes \pi^* \mathcal{L}_n)$$
$$\to H^1(\Gr(k,E), \mathcal{O}(n)\otimes \pi^* \mathcal{L}_n(-C)) \to \cdots\phantom{asdfgjklcdefrvgbrre}$$
By Theorem~\ref{H^1-vanishing},
  $$H^1(\Gr(k,E), \mathcal{O}(n)\otimes \pi^* \mathcal{L}_n(-C))=0.$$
Hence the morphism
$$\cdots\to H^0(\Gr(k,E), \mathcal{O}(n)\otimes \pi^* \mathcal{L}_n)
\to H^0(\Gr(k,E|_C), \mathcal{O}(n)\otimes \pi^* \mathcal{L}_n)$$
 is surjective.
 By Lemma $2.3$ of \cite{BP}, $\mathcal{O}_{\Gr(k,E|_C)}(n)\otimes \pi^* \mathcal{L}_n$ separates points on $\Gr(k,E|_C)$. Hence there exists a section $\sigma$  in $\mathcal{O}_{\Gr(k,E|_C)}(n)\otimes \pi^* \mathcal{L}_n$  such that $\sigma(z_1)=0$ and $\sigma(z_2)\neq 0$. Then choose a lift $\tilde{\sigma}\in H^0(\Gr(k,E), \mathcal{O}(n)\otimes \pi^* \mathcal{L}_n)$  of $\sigma$ which also has the property that $\tilde{\sigma}(z_1)=0$ and $\tilde{\sigma}(z_2)\neq 0$.
 Hence $\mathcal{O}_{\Gr(k,E)}(n)\otimes \pi^* \mathcal{L}_n$ separates points.
 
Now we will show that $\mathcal{O}_{\Gr(k,E)}(n)\otimes \pi^* \mathcal{L}_n$
separates tangent vectors of $\Gr(k,E)$.

Let $\tilde{\pi}: T_z(\Gr(k,E))\to T_x(X)$ be the natural morphism  induced by $\pi:\Gr(k,E)\to X$ with kernel $T_z(\Gr(k,E|_x))$.
Let $v\in T_z(\Gr(k,E))$.

$\mathbf{Case ~1:}$ Suppose that $\tilde{\pi}(v)=0\in T_x(X)$.
Then $v\in T_z(\Gr(k,E|_x))$. Choose a smooth curve $C\in |H|$ such that $x\in C$. Then 
$v\in T_z(\Gr(k,E|_x))\subseteq T_z(\Gr(k,E|_C))$.
Since by Lemma $2.3$ of \cite{BP}, on $\Gr(k,E|_C)$, 
$\mathcal{O}_{\Gr(k,E|_C)}(n)\otimes \pi^* \mathcal{L}_n$ separates tangent vectors, there exists a section 
$\sigma\in H^0(\Gr(k,E|_C, \mathcal{O}(n)\otimes \pi^* \mathcal{L}_n)$ such that $\sigma(z)=0$ and $v\notin T_z(\divi (\sigma))$. Since the natural morphism
$$H^0(\Gr(k,E), \mathcal{O}(n)\otimes \pi^* \mathcal{L}_n)
\to H^0(\Gr(k,E|_C), \mathcal{O}(n)\otimes \pi^* \mathcal{L}_n)$$
is surjective, there exists a lift 
$\tilde{\sigma}
\in H^0(\Gr(k,E), \mathcal{O}(n)\otimes \pi^* \mathcal{L}_n)$ of $\sigma$. Note that $\tilde{\sigma}(z)=0$ and also $v\notin T_z(\divi(\tilde{\sigma}))$. Hence in this case $\mathcal{O}_{\Gr(k,E)}(n)\otimes \pi^* \mathcal{L}_n$ separates tangent vectors.

$\mathbf{Case~2:}$ Suppose that $\tilde{\pi}(v)=w\neq 0$. We will show that there exists a smooth $D\in |mH|$, with $\frac{m}{2}H$ ample such that $w\in T_x(D)$. Hence $v\in T_z(\Gr(k,E|_D))$, and then the rest of the arguments follow from case 1.

To show such smooth $D\in |mH|$ with  $w\in T_x(D)$ exists  
let us consider the blow up morphism $\tau:Bl_x(X)\to X$ where $Bl_x(X)$ denotes the blow up of $X$ at $x$ with $E$ as the exceptional divisor. One can note that $E=\mathbb{P}(T_x(X))$. Let $y$ represent $w$ in $E$. Also one can assume that $\frac{m}{2}H$ is very ample and hence by  Lemma~\ref{lemma-3}, $\tau^*mH-E$ is very ample on $Bl_x(X)$.  Let $C$ be a smooth curve in $\tau^*mH-E$ passing through $y$ (such smooth curve exists by the proof of Theorem $3.1$ of \cite{SB}). Since $(\tau^*mH-E).E=1$, $\tau(C)\in |mH|$ is a smooth curve in $X$ and $w\in T_x(\tau(C))$.  Hence the proof.

\end{proof}

 Now since $\mathcal{O}_{\Gr(k,E)}(n)\otimes \pi^* \mathcal{L}_n$ is very ample for all large $n\in \mathcal{N}$,
 we can choose $k(r-k)$ sections such that they cut down $\Gr(k,E)$ into a smooth surface $\tilde{X}_n$. Next we show that $\tilde{X}_n$ is finite over $X$ for all large $n \in \mathcal{N}$. First we prove a general proposition concerning a general hyperplane section of a flat family is flat.
 \begin{proposition}
 \label{flatness-general-hyperplane-section}
  Let $f: X\to Y$ be a flat family obtained by a morphism $f$ from a smooth variety $X$ to a smooth surface $Y$. Let $X\subset \mathbb{P}^N$  be an embedding obtained by an very ample divisor $\mathcal{L}$  such that the general fiber is not a linear subspace of $\mathbb{P}^N$.  Then for a general hyperplane $H$, $X\cap H\to Y$ is also a flat family induced by $f$.
 \end{proposition}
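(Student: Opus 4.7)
My plan is to verify flatness of $X\cap H\to Y$ via the miracle flatness theorem: since $Y$ is regular of dimension two and (by Bertini) $X\cap H$ is smooth for a generic hyperplane $H$, it is enough to show that every fiber of $X\cap H\to Y$ has the expected dimension $d-1$, where $d=\dim X-\dim Y$ is the relative dimension of $f$. The hypothesis that a general fiber is not a linear subspace rules out $0$-dimensional fibers, so $d\geq 1$.

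A fiber $X_y\cap H$ has dimension larger than $d-1$ precisely when $X_y\subseteq H$. I would therefore set up the incidence
\[
Z\;=\;\{(y,H)\in Y\times(\mathbb{P}^N)^{\ast}:X_y\subseteq H\},
\]
and show that the projection $Z\to(\mathbb{P}^N)^{\ast}$ is not surjective, so that a general $H$ avoids the image. For each $y$, the fiber $W_y$ of $Z\to Y$ is the linear subspace of $(\mathbb{P}^N)^{\ast}$ parametrizing hyperplanes containing the projective span of $X_y$, hence $\dim W_y=N-1-\dim\operatorname{span}(X_y)$. Let $Y_0=\{y\in Y:X_y$ is a linear subspace$\}$; by upper semicontinuity of the dimension of the linear span (or, concretely, by viewing $Y_0$ as the image in $Y$ of the closed incidence inside $Y\times\Gr(d+1,N+1)$), the subset $Y_0$ is proper closed in $Y$, so $\dim Y_0\leq 1$. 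For $y\in Y_0$ one has $\dim W_y=N-d-1$, contributing at most $1+(N-d-1)=N-d$ to $\dim Z$; for $y\in Y\setminus Y_0$ the span has dimension at least $d+1$, so $\dim W_y\leq N-d-2$, contributing at most $2+(N-d-2)=N-d$. Thus $\dim Z\leq N-d<N$, and the image of $Z$ in $(\mathbb{P}^N)^{\ast}$ is a proper closed subset.

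Choose $H$ outside this image and also outside the dual variety of $X$, so that (by Bertini) $X\cap H$ is smooth. Then every fiber of $X\cap H\to Y$ has dimension exactly $d-1=\dim(X\cap H)-\dim Y$. Since $Y$ is regular, $X\cap H$ is smooth (in particular Cohen--Macaulay), and the fibers are equidimensional of the expected dimension, the miracle flatness theorem (e.g.\ Matsumura, \emph{Commutative Ring Theory}, Theorem 23.1) yields that $X\cap H\to Y$ is flat. The main obstacle I anticipate is the dimension count on $Z$: the assumption that a general fiber is not a linear subspace is exactly what forces $\dim Y_0\leq 1$, which prevents the linear-fiber stratum from contributing dimension $N-d+1$ and spoiling the bound; without this hypothesis one cannot hope to avoid the phenomenon that every hyperplane contains some fiber.
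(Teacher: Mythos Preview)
Your proof is correct and follows essentially the same route as the paper's: both set up the incidence $\{(y,H):X_y\subseteq H\}$, bound its dimension below $N$ so a general hyperplane contains no fiber, apply Bertini for smoothness of $X\cap H$, and conclude flatness via the miracle-flatness criterion (the paper phrases this as ``a morphism between smooth varieties with equidimensional fibres''); your stratification of $Y$ into linear-vs-nonlinear fibers is slightly cleaner bookkeeping than the paper's case split on $\dim X_y\geq 2$ versus $\dim X_y=1$, but the content is the same. One small slip: the dimension of the linear span of $X_y$ is \emph{lower} semicontinuous in $y$ (equivalently $h^0(\mathcal{I}_{X_y}(1))$ is upper semicontinuous), not upper, but your alternative Grassmannian-incidence argument establishes the closedness of $Y_0$ correctly regardless.
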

 
 \begin{proof}
 For each $y\in Y$, let $X_y$ denote the fibre.
  Consider the incidence variety $S=\{(y,H):  X_y\subset H\}\subset Y\times \check{\mathbb{P}}^N$. Let $p_1$ and $p_2$ denote the projections from $S$ to $Y$ 
  and  to $\check{\mathbb{P}}^N$ respectively. Then the fibre of $p_1$ over a point $y\in Y$ denoted by
  $S_y=\{H\in \check{\mathbb{P}}^N: X_y\subset H\}$. Note that $S_y$  is a linear subspace of $\check{\mathbb{P}}^N$ of dimension $N-\dim <X_y>-1$, where $<X_y>$ denotes the smallest linear subspace containing $X_y$ (i.e. the linear span of $X_y$). Since for each $y\in Y$, $ \dim S_y\leq  N - \dim X_y -1 $ then $\dim S\leq  N - \dim X_y -1 + \dim Y$.  The image  $p_2(S)$ is a proper closed subset of $\check{\mathbb{P}}^N$ unless $\dim X_y\leq 1$.  Hence for the case when each $y\in Y$, $\dim X_y\geq 2$, there will be a general hyperplane $H$ such that it contains no fibre $X_y$, in other words, for each $y$, $\dim X_y\cap H=\dim X_y-1$. By Bertini's theorem, shrinking the open set if required, we may assume $X\cap H$ is also smooth, hence in this case $X\cap H\to Y$ is a morphism induced from $f$ between smooth varieties with equidimensional fibres, hence $X\cap H\to Y$ is a flat family. Now we consider the case when $\dim X_y=1$, for some $y$ i.e. $X_y$ is a curve.  Consider $S_1=\{(y,H)\in S: X_y \textrm{ is  linearly embedded in } \mathbb{P}^1\}$. Since  the general fiber is not a linear subspace of $\mathbb{P}^N$, $p_1(S_1)$ is either a finite set of points or a curve $C$ in $Y$. Hence $\dim S_1=N-\dim X_y-1+\dim C=N-1$,  where $y\in C$, again $p_2(S_1)$ is a proper closed subset of $\check{\mathbb{P}}^N$, thus arguing as in the previous case one checks that   for a general hyperplane $H$, $X\cap H\to Y$ is a flat family.
 \end{proof}
 
 Next we give an example which shows that the hypothesis  that the general fiber is not a linear subspace of $\mathbb{P}^N$ is necessary.
 \begin{ex}
  Consider $X=SL_3/B$ which embeds in  $\mathbb{P}^8$, $Y=\mathbb{P}^2$. Let $f: X\to Y$ denote the morphism which sends each full flag to its linear subspace. One notes that each fibre is a linear space $\mathbb{P}^1$ in $\mathbb{P}^8$. It is known that 
  for a general hyperplane $H$, $X\cap H\to Y$ is birational. If $X\cap H\to Y$ is a flat family. it would be a finite map and hence an isomorphism, which is not true as we show next that there is no section from $\mathbb{P}^2\to SL_3/B$. Let $D_1$ and $D_2$ be the divisors in $SL_3/B$. They are pull backs of lines from $\mathbb{P}^2$ and $\mathbb{P}^2$ dual. Since the square of a line in $\mathbb{P}^2$ is a point, we get $D_1^2=L_1$ and $D_2^2 = L_2$, where $L_1$ and $L_2$ are fibres. The very ample divisor $H$ on $SL_3/B$ is $D_1 + D_2$. If we intersect these divisor $D_1 \cdot D_2 = L_1 + L_2$. Hence by squaring we get:

 $$(D_1+D_2)^2 = D_1^2 + D_2^2 + 2 D_1\cdot D_2 = 3 D_1\cdot D_2.$$

Assume there is a map $f$  from $\mathbb{P}^2$ to $SL_3/B$ and $f^*{(D_1)}=a$ and $f^*{(D_2)} = b$.  Then $f^*{(H)} = f^*{(D_1 + D_2)} = a + b$. Hence $f^*{(H^2)}= f^*{(D_1+D_2)^2} = (a + b)^2 = a^2 + b^2 + 2ab$. Hence we get: $a^2 + b^2 + 2ab = 3ab$. subtracting $4ab$ from both sides we obtain $(a-b)^2 = -ab$, which is true only when $a=0$ and $b=0$.

 \end{ex}



 \begin{theorem}
  \label{finiteness-of-X_n}
  For  all large $n\in \mathcal{N}$, the morphism $\tilde{X}_n\to X$ induced by $\pi$ is a finite morphism.
 \end{theorem}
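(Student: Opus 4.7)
The plan is to apply Proposition~\ref{flatness-general-hyperplane-section} inductively, starting from the smooth projective morphism $\pi: \Gr(k,E) \to X$, which has fibers $\Gr(k,r)$ of dimension $d := k(r-k)$. By the preceding theorem the line bundle $\mathcal{M}_n := \mathcal{O}_{\Gr(k,E)}(n) \otimes \pi^*\mathcal{L}_n$ is very ample for $n$ large, giving a closed embedding $\Gr(k,E) \hookrightarrow \mathbb{P}^N$. The restriction $\mathcal{M}_n|_{\pi^{-1}(x)} \cong \mathcal{O}_{\Gr(k,r)}(n)$ embeds each fiber as a subvariety of degree $n^d \cdot \deg_{\mathrm{Pl}}\Gr(k,r)$. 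For $n$ large this degree is at least $2$, so no fiber of $\pi$ is a linear subspace of $\mathbb{P}^N$.

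Next I would cut by one general hyperplane section of $\mathcal{M}_n$ at a time and invoke Proposition~\ref{flatness-general-hyperplane-section} at each step. Each application produces a new flat family over $X$ whose fibers have dimension one less; since hyperplane sections in $\mathbb{P}^N$ preserve degree, the new fibers again have degree $\geq 2$, and (as long as they are positive-dimensional) they remain non-linearly embedded, so the hypothesis of Proposition~\ref{flatness-general-hyperplane-section} persists. Bertini's theorem keeps the total space smooth after each general cut. After $d$ iterations we obtain a flat family $\tilde{X}_n \to X$ whose fibers are $0$-dimensional; this is the surface $\tilde{X}_n$ produced in the theorem (smoothness being guaranteed by a single application of Bertini to the $d$-tuple of general sections).

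Finally, $\tilde{X}_n \to X$ factors as the closed immersion $\tilde{X}_n \hookrightarrow \Gr(k,E)$ followed by the projective morphism $\pi$, so it is proper. A proper morphism with $0$-dimensional fibers is finite, which gives the desired conclusion. I expect the main technical point to be the verification that the ``general fiber is not a linear subspace'' hypothesis of Proposition~\ref{flatness-general-hyperplane-section} is maintained through all $d$ iterations; this reduces to the elementary observation that hyperplane sections of a positive-dimensional variety of degree $>1$ still have degree $>1$ and hence are not linearly embedded.
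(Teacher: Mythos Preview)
Your proposal is correct and follows essentially the same route as the paper: iterate Proposition~\ref{flatness-general-hyperplane-section} $k(r-k)$ times to drop fiber dimension by one at each step, then conclude finiteness from properness plus quasi-finiteness (the paper phrases this last step via Zariski's main theorem, which amounts to the same thing). Your treatment is in fact more careful than the paper's, since you explicitly verify the ``general fiber is not a linear subspace'' hypothesis at each stage by tracking the degree of the fibers under the fixed embedding by $\mathcal{M}_n$; the paper simply asserts that the proposition can be applied repeatedly.
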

\begin{proof}
 By Proposition \ref{flatness-general-hyperplane-section}, a general section of $\mathcal{O}_{\Gr(k,E)}(n)\otimes \pi^* \mathcal{L}_n$ cuts each fibre of $\pi: \Gr(K,E)\to X$ into a variety of dimension exactly one less. Using Proposition ~\ref{flatness-general-hyperplane-section} repeatedly one see that the morphism $\tilde{X}_n\to X$ is quasi finite. Since the morphism is also proper, it is a finite map by Zariski's main theorem.
\end{proof}

Since $\tilde{X}_n$ is complete intersection,
then  $\deg \mathcal{O}_{\Gr(k,E)}(1)|_{\tilde{X}_n}$ with respect to $\tilde{X}_n.\pi^*H$, can be calculated as the cup product of the cycle classes of the corresponding divisors  with the class of $\mathcal{O}_{\Gr(k,E)}(1)$.\\
Let $D=\tilde{X}_n\cdot \pi^*H$.

On $\Gr(k,E)$, we have the universal exact sequence:
$$0\to \mathcal{S}(E)\to \pi^*E\to \mathcal{Q}(E)\to 0.$$

Hence 
$$0\to \mathcal{S}(E)|_{\tilde{X}_n}\to \pi^*E|_{\tilde{X}_n}\to \mathcal{Q}(E)|_{\tilde{X}_n}\to 0.$$

\begin{proposition}
\label{key-proposition3}
Let $f_n:\tilde{X_n}\to X$ denote the morphism induced by $\pi$. Then 
 $$\lim\limits_{n\to\infty}\frac{\mu(f^*_nE)-\mu(\mathcal{S}(E)|_{\tilde{X}_n})}{\deg f_n}= 0,$$ where  $\mu$ is taken with respect to $D$.
\end{proposition}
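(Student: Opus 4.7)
The plan is to reduce the claim to an intersection-theoretic computation on $\Gr(k,E)$ and extract the leading behavior in $n$. Set $\alpha = c_1(\mathcal{O}_{\Gr(k,E)}(1))$, $\beta = c_1(\pi^*H)$, and $s = k(r-k)$. Then the cycle class of $\tilde{X}_n$ in $\Gr(k,E)$ equals $\gamma_n^{s}$, where $\gamma_n = n\alpha + (n^{1/2}-\tfrac{nk}{r})\beta$. Since $D = \pi^*H|_{\tilde{X}_n}$, every intersection number on $\tilde{X}_n$ of classes pulled back from $\Gr(k,E)$ equals the corresponding intersection number with $\beta\cdot\gamma_n^{s}$ on $\Gr(k,E)$.

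The first step is to pin down $\deg f_n$. Because $f_n$ is finite and $f_n^*H = D$, the projection formula gives $\deg f_n\cdot H^2 = D^2 = \beta^2\cdot\gamma_n^{s}$. Expanding $\gamma_n^{s}$ binomially and using $\beta^3 = 0$ (as $\beta$ is pulled back from the surface $X$), only the $\alpha^s$ term survives, so $\beta^2\cdot\gamma_n^{s} = c\,n^s H^2$, where $c = \pi_*(\alpha^s)$ is the Pl\"ucker degree of the fiber Grassmannian $\Gr(k,r)$. Thus $\deg f_n = c\,n^s$. The same calculation applied to $c_1(f_n^*E)\cdot D$ gives $\mu(f_n^*E) = \deg f_n\cdot\mu(E)$, the expected projection-formula identity.

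The heart of the argument is the parallel analysis of $c_1(\mathcal{S}(E))\cdot D$. From the tautological sequence together with the Pl\"ucker identification, $c_1(\mathcal{S}(E))$ is a linear combination of $\alpha$ and $\pi^*c_1(E) = \beta$. Expanding $c_1(\mathcal{S}(E))\cdot\beta\cdot\gamma_n^{s}$, only the binomial indices $j = s$ and $j = s-1$ survive the constraint $\beta^3 = 0$. The $j = s$ term contributes an $n^s$-multiple of $\pi_*(\alpha^{s+1})\cdot H$, a characteristic-class expression on $X$; the $j = s-1$ term contributes $s\,n^{s-1}(n^{1/2}-\tfrac{nk}{r})\,c\,H^2$, which splits into an order-$n^s$ piece (from $-\tfrac{nk}{r}$) and an order-$n^{s-1/2}$ piece (from $n^{1/2}$). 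The role of the correction coefficient $-nk\mu(E)/H^2$ in the definition of $\mathcal{L}_n$ is precisely to force the two $n^s$ contributions to combine with $\mu(f_n^*E) = c\,n^s\mu(E)$ in an exact cancellation, leaving
\[
\mu(f_n^*E)-\mu(\mathcal{S}(E)|_{\tilde{X}_n}) = O(n^{s-1/2}).
\]
Dividing by $\deg f_n = c\,n^s$ yields $O(n^{-1/2})$, which tends to $0$.

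The principal obstacle is the Chern-class bookkeeping on the Grassmannian bundle: identifying $\pi_*(\alpha^{s+1})$ under the hypothesis $c_1(E) = H$ and verifying that it combines with the $-\tfrac{nk}{r}$-correction exactly as needed. Once this identification is in place, the $n^s$-cancellation is a direct consequence of the construction of $\mathcal{L}_n$, and the residual $O(n^{s-1/2})$ error gives the stated limit.
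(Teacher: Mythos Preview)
Your outline is correct and follows the same intersection-theoretic route as the paper: write the class of $\tilde X_n$ as $\gamma_n^{s}$ with $s=k(r-k)$, expand against $\beta=\pi^*H$ using $\beta^3=0$, and read off the $n^s$ and $n^{s-1/2}$ coefficients. Your identification $\deg f_n = c\,n^s$ with $c=[\mathcal{O}(1)]^{s}\cdot\mathcal{F}$ matches the paper exactly, and the claimed residual order $n^{s-1/2}$ is what the paper obtains (their final line is $\frac{kn^{1/2}H\cdot H}{n}\to 0$, i.e.\ $O(n^{-1/2})$ after dividing by $\deg f_n$).

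The one place where you stop short is the step you yourself flag as ``the principal obstacle'': the evaluation of $\alpha^{s+1}\cdot\beta$ (equivalently $\pi_*(\alpha^{s+1})$). You assert that this quantity combines with the $-\tfrac{nk}{r}$ correction to cancel the $n^s$ term, but you do not compute it. The paper does not leave this as abstract Chern-class bookkeeping; it restricts to a smooth curve $C\in|H|$, so that
\[
[\mathcal{O}_{\Gr(k,E)}(1)]^{\,s+1}\cdot[\pi^*H]=[\mathcal{O}_{\Gr(k,E|_C)}(1)]^{\,s+1},
\]
and then invokes the curve computation (Lemma~2.3 of \cite{PS}) to obtain the explicit value $(s+1)\,k\,\mu(E)\,c$. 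With this number in hand the $n^s$ cancellation is immediate (using $\mu(E)=H^2/r$ from $c_1(E)=H$), and one lands on the paper's formula
\[
\frac{\mu(\mathcal{S}(E)|_{\tilde X_n})}{\deg f_n}=\mu(E)-\frac{k\,H^2}{n^{1/2}}.
\]
So your sketch becomes a proof once you supply this restriction-to-a-curve argument for $\alpha^{s+1}\cdot\beta$; without it, the cancellation you need remains unverified.
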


\begin{proof}
\begin{eqnarray*}
 \deg \mathcal{Q}(E)|_{\tilde{X}}&=&[\mathcal{O}(n)\otimes\pi^*\mathcal{L}_n]^{k(r-k)}\cdot [\pi^*H]\cdot[\mathcal{O}(1)]\\
 &=&([\mathcal{O}(n)]+[\pi^*\mathcal{L}_n])^{k(r-k)}\cdot [\pi^*H] \cdot [\mathcal{O}(1)]\\
 &=&([\mathcal{O}(n)]^{k(r-k)}+k(r-k)[\mathcal{O}(n)]^{k(r-k)-1}\cdot[\pi^*\mathcal{L}_n])\cdot[\pi^*H]\cdot[\mathcal{O}(1)]\\
 &=&n^{k(r-k)}[\mathcal{O}(1)]^{k(r-k+1)}\cdot [\pi^*H]+k(r-k)n^{k(r-k)-1}[\mathcal{O}(1)]^{k(r-k)}\cdot[\pi^*\mathcal{L}_n]\cdot [\pi^*H]\\
 &=&n^{k(r-k)}[\mathcal{O}(1)]^{k(r-k)+1}\cdot[\pi^*H]+k(r-k)n^{k(r-k)-1}([\mathcal{O}(1)]^{k(r-k)}\cdot \mathcal{F})\deg\mathcal{L}_n\\
\end{eqnarray*}
the last equality follows from the fact that
$$[\mathcal{O}(1)]^{k(r-k)}\cdot [\pi^*\mathcal{L}_n]\cdot [\pi^*H]=
([\mathcal{O}(1)]^{k(r-k)}\cdot \mathcal{F})\deg\mathcal{L}_n,$$ where $\mathcal{F}$ denotes any fiber of $\pi:\Gr(k,E)\to X$.

\begin{eqnarray*}
 \deg \pi^*E|_{\tilde{X}}&=&[\mathcal{O}(n)\otimes\pi^*\mathcal{L}_n]^{k(r-k)}\cdot [\pi^*H]\cdot [\pi^*\det E]\\
 &=&n^{k(r-k)}[\mathcal{O}(1)]^{k(r-k)}\cdot [\pi^*H]\cdot [\pi^*\det E]+[\pi^*\mathcal{L}_n]\cdot [\pi^*H]\cdot [\pi^*\det E]\\
 &=&n^{k(r-k)}([\mathcal{O}(1)]\cdot \mathcal{F})\deg E+0\
\end{eqnarray*}
the last equality follows from the fact that
$$[\pi^*\mathcal{L}_n]\cdot[\pi^*H]\cdot[\pi^*\det E]=0.$$

Need to find $[\mathcal{O}_{\Gr(k,E)}(1)]^{k(r-k)+1}\cdot[\pi^*H]$. 
Note that if $C\in |H|$ be any smooth curve, then 
$$[\mathcal{O}_{\Gr(k,E)}(1)]^{k(r-k)+1}\cdot [\pi^*H]=[\mathcal{O}_{\Gr(k,E|_C)}(1)]^{k(r-k)+1}.$$ Hence by  Lemma $2.3$ of \cite{PS}

$$
 [\mathcal{O}_{\Gr(k,E)}(1)]^{k(r-k)+1}\cdot[\pi^*H]=(k(r-k)+1)k\mu(E)([\mathcal{O}_{\Gr(k,E)}(1)]^{k(r-k)}\cdot \mathcal{F})
 $$
 
Hence \begin{eqnarray*}
       \deg \mathcal{S}(E)|_{\tilde{X}_n}&=&\deg \pi^*E|_{\tilde{X}}-\deg\mathcal{O}(1)|_{\tilde{X}}\\
       &=&n^{k(r-k)}([\mathcal{O}(1)]^{k(r-k)}\cdot \mathcal{F})\deg E-n^{k(r-k)}[\mathcal{O}(1)]^{k(n-k)+1}\cdot [\pi^*H]\\
       &&-(k(r-k)n^{k(r-k)-1}([\mathcal{O}(1)]^{k(r-k)}\cdot \mathcal{F})\deg\mathcal{L}_n\\
       &=&n^{k(r-k)}([\mathcal{O}(1)]^{k(r-k)}\cdot \mathcal{F})\deg E-n^{k(r-k)}
       (k(r-k)+1)k\mu(E)([\mathcal{O}_{\Gr(k,E)}(1)]^{k(r-k)}\cdot \mathcal{F})\\
       &&-k(r-k)n^{k(r-k)-1}([\mathcal{O}(1)]^{k(r-k)}\cdot \mathcal{F})\deg\mathcal{L}_n\\
       &=&n^{k(r-k)}([\mathcal{O}(1)]^{k(r-k)}\cdot \mathcal{F})r\mu (E)-n^{k(r-k)}
       (k(r-k)+1)k\mu(E)([\mathcal{O}_{\Gr(k,E)}(1)]^{k(r-k)}\cdot \mathcal{F})\\
       &&-k(r-k)n^{k(r-k)-1}([\mathcal{O}(1)]^{k(r-k)}\cdot \mathcal{F})\deg\mathcal{L}_n\\
       &=&(r-k)n^{k(r-k)}([\mathcal{O}(1)]^{k(r-k)}\cdot \mathcal{F})\mu(E)-
       n^{k(r-k)}
       k(r-k)k\mu(E)([\mathcal{O}_{\Gr(k,E)}(1)]^{k(r-k)}\cdot \mathcal{F})\\
       &&-k(r-k)n^{k(r-k)-1}([\mathcal{O}(1)]^{k(r-k)}\cdot \mathcal{F})(-nk\mu(E)+n^{1/2}H\cdot H)\\
       &=&(r-k)n^{k(r-k)}([\mathcal{O}(1)]^{k(r-k)}\cdot \mathcal{F})\mu(E)-k(r-k)n^{k(r-k)-1}n^{1/2}([\mathcal{O}(1)]^{k(r-k)}\cdot \mathcal{F})H\cdot H.\\
      \end{eqnarray*}
    
    Hence 
    \begin{eqnarray*}
    \mu(\mathcal{S}(E)_{\tilde{X}_n})&=&\frac{\deg \mathcal{S}(E)|_{\tilde{X}_n}}{r-k}\\
    &=&n^{k(r-k)}([\mathcal{O}(1)]^{k(r-k)}\cdot \mathcal{F})\mu(E)-k n^{k(r-k)-1}n^{1/2}([\mathcal{O}(1)]^{k(r-k)}\cdot \mathcal{F})H\cdot H.\\
     \end{eqnarray*}
    
 Note that the degree of $f_n$ is equal to the cardinality of a general fiber of $f_n$ which equals to 
 $[\mathcal{O}_{\Gr(k,E_x)}(n)]^{k(r-k)}=n^{k(r-k)}([\mathcal{O}_{\Gr(k,E)}(1)]^{k(r-k)}\cdot \mathcal{F})$.\\
 
 Hence
 \begin{eqnarray*}
    \frac{\mu(\mathcal{S}(E)_{\tilde{X}_n})}{\deg f_n}
    &=&\frac{n^{k(r-k)}([\mathcal{O}(1)]^{k(r-k)}\cdot \mathcal{F})\mu(E)-k n^{k(r-k)-1}n^{1/2}([\mathcal{O}(1)]^{k(r-k)}\cdot \mathcal{F})H\cdot H}{n^{k(r-k)}([\mathcal{O}_{\Gr(k,E)}(1)]^{k(r-k)}\cdot \mathcal{F})}.\\
    &=& \mu(E)-\frac{kn^{1/2}H\cdot H}{n}\\
     \end{eqnarray*}
 
Therefore
\begin{eqnarray*}
\lim\limits_{n\to\infty}\frac{f_n^*E-\mu (\mathcal{S}(E)|_{\tilde{X}_n})}{\deg f_n}&
=&\lim\limits_{n\to\infty}\frac{\deg f_n\mu(E)-\mu (\mathcal{S}(E)|_{\tilde{X}_n})}{\deg f_n}\\
&=& \lim\limits_{n\to\infty}~[\mu(E)-\mu(E)+\frac{kn^{1/2}H\cdot H}{n}]\\
&=&\lim\limits_{n\to\infty}~ \frac{kn^{1/2}H\cdot H}{n}\\
&=&0.\\
\end{eqnarray*}
\end{proof}

\begin{proof}[Theorem~\protect{\ref{main3}}][]

 Now we complete the proof of the only if direction of theorem. Suppose $E$ is strongly semistable. Then for any finite morphism $f:\tilde{X} \to X$, $f^*(E)$ is semistable. Hence for all $k$, if  $W$ is a  subsheaf of $f^*E$, 
 $\mu(W)\leq \mu(f^*E)$. Therefore $\nu_k(E)\leq \mu(E)$ for all $k$. Now  as before one can construct $\tilde{X_n}$ and by Proposition~\ref{key-proposition3}, the theorem follows.
\end{proof}

\begin{remark}
One might hope to get a similar result of Theorem~\ref{main}, without the assumption $c_1(E)=H$, without even the lifting assumptions on the surface and the bundle. But at present we have no idea how to avoid Kodaira vanishing theorem. 
\end{remark}

However Theorem~\ref{main3} has the following corollary.

\begin{cor}
\label{main}
 Let  $E$ be a strongly semistable vector bundle  of rank $r\geq 2$  with $\Delta(E)=0$ on a smooth polarized surface $(X,H)$ such that $X, E, H$ admit  liftings $\overline{X}$, $\overline{E}$, $\overline{H}$ respectively to $W_2(\mathbb{K})$. 
  Suppose $E$ is also strongly semistable with respect to $c_1(E)+ mH$ for all $m\gg 0$.  Then $\nu_k(E)=\mu(E)$ for all $k$, where slope is taken with respect to $c_1(E)+m H$ for some large $m$. 
\end{cor}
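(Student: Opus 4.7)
The plan is to reduce the corollary to Theorem~\ref{main3} by a line-bundle twist that absorbs $c_1(E)$ into the polarization. Fix $m \gg 0$ divisible by $r$ so that $H' := c_1(E) + mH$ is ample and $E$ is strongly semistable with respect to $H'$ (the latter being granted by the standing hypothesis of the corollary). Set $L := \mathcal{O}_X\bigl((m/r)H\bigr)$ and $E' := E \otimes L$. A direct Chern class computation gives
$$c_1(E') = c_1(E) + r\, c_1(L) = c_1(E) + mH = H',$$
so $E'$ satisfies the $c_1=H'$ condition of Theorem~\ref{main3} with respect to the polarization $H'$.

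Next I would verify the remaining hypotheses of Theorem~\ref{main3} for the triple $(X, E', H')$. Strong semistability of $E'$ with respect to $H'$ follows from that of $E$, since tensoring with an invertible sheaf commutes with Frobenius pullback and preserves (semi)stability. Proposition~\ref{discreminant-properties}(1) gives $\Delta(E') = \Delta(E) = 0$. For the lifting, take $\overline{L} := \overline{H}^{\otimes m/r}$, which is available because $\overline{H}$ is part of the given lifting data; then $\overline{E}' := \overline{E} \otimes \overline{L}$ lifts $E'$ and $\overline{H}' := c_1(\overline{E}) + m\overline{H}$ lifts $H'$. Applying Theorem~\ref{main3} to $(X, E', H')$ then yields $\nu_k(E') = \mu(E')$ for all $k$, with slopes computed with respect to $H'$.

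Finally I would transfer the conclusion from $E'$ back to $E$. For any finite morphism $f:\tilde X \to X$ with $\tilde X$ normal, tensoring with the invertible sheaf $f^*L$ induces a rank-preserving bijection on coherent subsheaves and uniformly shifts degrees: for $W \subset f^*E$ of rank $k$,
$$\deg(W \otimes f^*L) = \deg W + k \cdot \deg f \cdot (c_1(L) \cdot H').$$
Dividing by $k \cdot \deg f$ and taking $\limsup$ over all finite covers yields $\nu_k(E') = \nu_k(E) + c_1(L)\cdot H'$, while evidently $\mu(E') = \mu(E) + c_1(L)\cdot H'$. The identity $\nu_k(E') = \mu(E')$ therefore translates to $\nu_k(E) = \mu(E)$ with respect to $H'$ for all $k$, which is the desired conclusion.

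The only delicate point is ensuring that the twist $L$ admits a lift to $W_2(\mathbb{K})$; this is automatic here because $L$ is a tensor power of $H$, whose lifting is part of the hypothesized data. No new vanishing-theorem input beyond that invoked in Theorem~\ref{main3} is required, and the corollary is essentially a cosmetic extension obtained by absorbing $c_1(E)$ into the polarization via a judicious twist.
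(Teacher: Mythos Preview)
Your proof is correct and follows essentially the same route as the paper's own argument. The paper twists by $nH$ so that $c_1(E\otimes nH)=c_1(E)+rnH$ matches the new polarization, then invokes Theorem~\ref{main3} and Lemma~\ref{lemma1}; your choice $L=(m/r)H$ with $m$ divisible by $r$ is precisely this twist, and your direct degree-shift computation for $\nu_k$ is just the content of Lemma~\ref{lemma1}. You are in fact a bit more careful than the paper in spelling out that the twisted bundle $E'$ and the new polarization $H'$ still lift to $W_2(\mathbb{K})$, which the paper leaves implicit.
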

Before going to the proof, we first prove a useful lemma.
\begin{lemma}
\label{lemma1}
 Let  $E$ be a vector bundle on a smooth polarized surface $X$.  Let $\mathcal{L}$ be a line bundle on $X$. Then for any $k$,
 $$\nu_k(E)=\mu(E)\textrm{ if and only if } \nu_k(E\otimes \mathcal{L})=\mu(E\otimes \mathcal{L})$$
\end{lemma}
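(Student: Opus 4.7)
The plan is to show that the quantity $\nu_k(E) - \mu(E)$ is invariant under tensoring by a line bundle, which immediately yields the equivalence. The key observation is that for any finite morphism $f \colon \tilde{X} \to X$ with $\tilde X$ normal, the operation $W \mapsto W \otimes f^*\mathcal{L}$ defines a bijection between rank $k$ subsheaves of $f^*E$ and rank $k$ subsheaves of $f^*(E \otimes \mathcal{L}) = f^*E \otimes f^*\mathcal{L}$, with inverse given by tensoring with $(f^*\mathcal{L})^{-1}$.

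First, I would fix such a finite cover $f$ and equip $\tilde X$ with the pulled back polarization $f^*H$ (which is again ample since $f$ is finite). By the projection formula on normal projective surfaces,
$$\deg_{f^*H}(f^*\mathcal{L}) \;=\; f^*\mathcal{L} \cdot f^*H \;=\; \deg(f)\cdot (\mathcal{L}\cdot H) \;=\; \deg(f)\cdot \deg_H(\mathcal{L}).$$
Consequently, for any rank $k$ subsheaf $W \subset f^*E$,
$$\mu(W\otimes f^*\mathcal{L}) \;=\; \mu(W) + \deg(f)\cdot \deg_H(\mathcal{L}).$$
Taking the supremum over all rank $k$ subsheaves (which corresponds, under the bijection above, to all rank $k$ subsheaves on the other side),
$$\frac{e_k(f^*(E\otimes\mathcal{L}))}{\deg f} \;=\; \frac{e_k(f^*E)}{\deg f} + \deg_H(\mathcal{L}).$$

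Finally, passing to the limsup over all finite covers $f \colon \tilde X \to X$ with $\tilde X$ normal, one obtains $\nu_k(E\otimes\mathcal{L}) = \nu_k(E) + \deg_H(\mathcal{L})$. Since trivially $\mu(E\otimes\mathcal{L}) = \mu(E) + \deg_H(\mathcal{L})$, subtracting gives
$$\nu_k(E\otimes\mathcal{L}) - \mu(E\otimes\mathcal{L}) \;=\; \nu_k(E) - \mu(E),$$
from which the biconditional is immediate. There is no real obstacle here; the only point to verify carefully is that intersection numbers and the projection formula behave correctly on normal (possibly singular) covers, which is standard.
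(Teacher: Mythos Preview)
Your proof is correct and follows essentially the same approach as the paper: both arguments establish the identity $\nu_k(E\otimes\mathcal{L})=\nu_k(E)+\deg_H(\mathcal{L})$ via the bijection $W\mapsto W\otimes f^*\mathcal{L}$ on rank $k$ subsheaves, and then deduce the biconditional from the parallel shift $\mu(E\otimes\mathcal{L})=\mu(E)+\deg_H(\mathcal{L})$. Your version is slightly more explicit about the projection formula on normal covers, but the underlying idea is the same.
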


\begin{proof}
Note that if there exists a  sequence of finite coverings $f_n:X_n\to X$ and  subbundles $F_n$ of $f_n^*E$ of rank $k$, then  for each $n$,
$F_n\otimes f^*_n\mathcal{L}$ is also a subbundle of $f_n^*(E\otimes \mathcal{L})$ of rank $k$.  Similarly if there exists a  sequence of finite coverings $f_n:X_n\to X$ and  subbundles $G_n$ of 
$f_n^*(E\otimes \mathcal{L})$ of rank $k$, then  for each $n$,
$G_n\otimes f^*_n\mathcal{L}^{-1}$ is also a subbundle of $f_n^*E$.
We also have
 $$ \lim\limits_{n\to\infty}\frac{\mu (F_n)}{\deg f_n}+\deg \mathcal{L}=
 \lim\limits_{n\to\infty}\frac{\mu( F_n\otimes f^*_n\mathcal{L})}{\deg f_n}.$$
Thus $$\nu_k(E\otimes \mathcal{L})=\nu_k(E)+\deg \mathcal{L}.$$
Hence the lemma follows.
 \end{proof}
 
 \begin{proof}[Corollary~\protect{\ref{main}}][]
  First note that $E$ is strongly semistable if and only if  for any line bundle  $\mathcal{L}$, $E\otimes \mathcal{L}$ is so and we also have that $\Delta (E)=\Delta(E\otimes\mathcal{L})$. Also by Lemma~\ref{lemma1}, $\nu_1(E)=\mu(E)$ if and only if  $\nu_1(E\otimes \mathcal{L})=\mu(E\otimes \mathcal{L})$.  Also $c_1(E\otimes nH)=c_1(E)+rnc_1(H)$. Hence  the corollary follows from Theorem~\ref{main3}.
 \end{proof}
We conclude this section with the following remarks where we give criterion, when the hypothesis ``$E$ is strongly semistable with respect to $c_1(E)+ mH$ for all $m\gg 0$"  of 
Corollary ~\ref{main}  holds.
\begin{remark}

\begin{enumerate}
 \item  When $E$ is a vector bundle with $c_1(E)=c_2(E)=0$, then it satisfies all the hypothesis of Theorem~\ref{main}. Hence in this case given an ample line bundle $H$, $E$ is strongly semistable  with respect to $H$ if and only if $\nu_k(E)=\mu(E)$ for all $k$.
 \item If $E$ is a strongly semistable with respect to $c_1(E)$, then $E$ is strongly semistable  with respect to $c_1(E)+mH$ for all $m$, as 
$\mu_{c_1(E)+mH}(\_)=\mu_{c_1(E)}(\_)+\mu_{mH}(\_)$. 

Next suppose that $E$ is not strongly semistable with respect to $c_1(E)$
Consider $\lim_{n\to\infty}\frac{\mu_H({F_X^{n}}^*E)-\mu_H(W_n)}{p^n}$
where $W_n$ denote a maximal subsheaf of ${F_X^{n}}^* E$. Suppose the limit is nonzero say $\delta>0$. Since $E$ is not strongly semistable with respect to $c_1(E)$, then there exists $n_0$ such that ${F_X^{n}}^*$ is not semistable for all $n\geq n_0$. 
By \cite{Langer}, it is known that there exists $n_0+k$ such that if $V$ is the maximal destabilizing subsheaf ${F_X^{n_0+k}}^*(E)$ then $F_X^*(V)$ is the
maximal destabilizing subsheaf of ${F_X^{n_0+k+1}}^*(E)$. Let $V_{n_0},\ldots,V_{n_K}$ are  maximal destabilizing subsheaves of ${F_X^{n_0}}^*(E),\ldots, {F_X^{n_0+k}}^*(E)$. Let 
${\epsilon}_i=\frac{\mu_H(V_{n_0+i})-\mu_H({F_X^{n_0+i}}^*(E))}{p^{n_0+i}}$. Now choose $m$ such that $m\delta>{\epsilon}_i$ for all $i$.
Let $V$ be a subsheaf of ${F_X^n}^*(E)$, then 
\begin{eqnarray*}
\frac{
\mu_{c_1(E)+mH}({F_X^n}^*(E))-\mu_{c_1(E)+mH}(V)}{p^n}&
=&
\frac{\mu_{c_1(E)}({F_X^n}^*(E))-\mu_{c_1(E)}(V)+\mu_{mH}({F_X^n}^*(E))-\mu_{mH}(V)}{p^n}\\
&\geq& -\max\{{\epsilon}_i: i\}+m\delta\\
&>&0.\\
\end{eqnarray*}
Hence whenever  $\delta>0$, $E$ is strongly semistable with respect to $c_1(E)+mH$ for all $m\gg 0$.
\end{enumerate}


\end{remark}
 



 \bibliographystyle{plain}
\bibliography{strong-restriction.bib}
\end{document}